\newcounter{quotecount}
\pgfplotsset{compat=1.12}	
\tikzset{snake arrow/.style=
{-,
decorate,
decoration={snake,amplitude=.2mm,segment length=1mm,post length=0mm}}
}
\theoremstyle{plain}
\newtheorem{theorem}{Theorem}[section]
\newtheorem{lemma}[theorem]{Lemma}
\newtheorem{corollary}[theorem]{Corollary}
\newtheorem{proposition}[theorem]{Proposition}
\newtheorem{remark}[theorem]{Remark}
\newtheorem{definition}[theorem]{Definition}
\newtheorem{notation}[theorem]{Notation}
\newtheorem{example}[theorem]{Example}
\newtheorem*{ack}{Acknowledgement}
\theoremstyle{definition}
\newtheorem{defn}[theorem]{Definition}
\newcommand{\PC}{{\mathcal P}}
\renewcommand{\Re}{\operatorname{Re}}
\renewcommand{\Im}{\operatorname{Im}}
\renewcommand{\Im}{\text{Im}}
\newcommand{\rank}{\operatorname{rank}}					
\newcommand{\cL}{\mathcal{L}}
\newcommand{\C}{\mathbb{C}}
\newcommand{\Q}{\mathbb{Q}}
\newcommand{\R}{\mathbb{R}}
\newcommand{\Z}{\mathbb{Z}}
\newcommand{\h}{\mathfrak{h}}
\newcommand{\s}{\mathfrak{S}}
\newcommand{\re}{\text{Re}}											
\newcommand{\vepsi}{\varepsilon}
\newcommand{\Qt}{\Q[t^{\pm 1}]}				
\newcommand{\f}[5]{
\begin{array}{rcl}
#1:#2 & \longrightarrow & #3 \\
#4 & \longmapsto & #5  \\
\end{array}
}						
\DeclareMathOperator{\Aut}{Aut}										
\DeclareMathOperator{\codim}{codim}		
\def\part{\partial}
\def\ker{\mbox{Ker}}
\def\lra{\longrightarrow}
\def\bd{\begin{definition}}
\def\ed{\end{definition}}
\def\bt{\begin{theorem}}
\def\et{\end{theorem}}
\def\br{\begin{remark}}
\def\er{\end{remark}}
\def\bc{\begin{corollary}}
\def\ec{\end{corollary}}
\def\bp{\begin{proposition}}
\def\ep{\end{proposition}}
\def\be{\begin{equation}}
\def\ee{\end{equation}}
\def\bn{\begin{enumerate}}
\def\en{\end{enumerate}}
\def\ba{\begin{array}}
\def\ea{\end{array}}
\def\bl{\begin{lemma}}
\def\el{\end{lemma}}
\def\bex{\begin{example}}
\def\eex{\end{example}}
\begin{document}

\title[]{On the signed Euler characteristic property for subvarieties of abelian varieties}
\author{Eva Elduque}
\address{Department of Mathematics,
          University of Wisconsin-Madison,
          480 Lincoln Drive, Madison WI 53706-1388, USA.}
\email {evaelduque@math.wisc.edu}
\author{Christian Geske}
\address{Department of Mathematics,
          University of Wisconsin-Madison,
          480 Lincoln Drive, Madison WI 53706-1388, USA.}
\email {cgeske@math.wisc.edu}
\author{Laurentiu Maxim}
\address{Department of Mathematics,
          University of Wisconsin-Madison,
          480 Lincoln Drive, Madison WI 53706-1388, USA.}
\email {maxim@math.wisc.edu}

\thanks{}

\date{\today}

\subjclass[2000]{Primary 58K05, 32S60; Secondary 14K12.}

\keywords{abelian variety, stratified Morse theory, Morse function, stratification, intersection homology, signed Euler characteristic}

\begin{abstract}
We give an elementary proof of the fact that a pure-dimensional closed subvariety of a complex abelian variety has a signed intersection homology Euler characteristic. We also show that such subvarieties which, moreover, are local complete intersections, have a signed Euler-Poincar\'e characteristic. Our arguments rely on the construction of circle-valued Morse functions on such spaces, and use in an essential way the stratified Morse theory of Goresky-MacPherson. Our approach also applies (with only minor modifications) for proving similar statements in the analytic context, i.e., for subvarieties of compact complex tori. Alternative proofs of our results can be given by using the general theory of perverse sheaves. 
\end{abstract}

\maketitle

\tableofcontents

\section{Introduction}

A compact {complex torus} of dimension $g$ is a torus of real dimension $2g$ that carries the structure of a complex manifold. It can always be obtained as the quotient of a $g$-dimensional complex vector space by a lattice of rank $2g$. 
A complex {\it abelian variety} of dimension $g$ is a compact complex torus of dimension $g$ that is also a complex projective variety.  Since it is a complex torus, an abelian variety carries the structure of a group. 
Abelian varieties are among the most studied objects in algebraic geometry and indispensable tools 
in topology and number theory.

A complex {\it semi-abelian variety} is a complex algebraic group $G$ which is an extension
$$1 \to T \to G \to A \to 1,$$
where $A$ is a complex abelian variety and $T\cong(\C^*)^m$ is a complex affine torus. 
One of the most interesting examples of semi-abelian varieties is the {Albanese variety}  
associated to a smooth complex quasi-projective variety. 
A host of problems in algebraic geometry and topology can be reduced to the study of semi-abelian varieties via the corresponding (generalized) {Albanese map}, e.g., see \cite{Iit}.

The departing point for this note is the following result of Franecki-Kapranov: 
\bt\label{th1} \cite{FK}
If $G$ is a semi-abelian variety and $\PC$ is a perverse sheaf on $G$, the Euler characteristic of $\PC$ is non-negative, i.e., \be\label{nnp} \chi(G,\PC)\geq 0.\ee
\et

The proof of (\ref{nnp}) in \cite{FK} is derived from a Riemann-Roch-type formula for constructible sheaf complexes on a semi-abelian variety. The result also follows from 
the generic vanishing theorems for perverse sheaves \cite{Kra,LMWb}, see also \cite{BSS,Sc15,We} for the case of abelian varieties, while the corresponding statement for the Euler characteristic of perverse sheaves on a complex affine torus was proved in \cite{GL,LMWc}.

As important special cases of Theorem \ref{th1}, we mention the following:\\
\noindent $(a)$ If $X$ is a smooth closed  $n$-dimensional subvariety of a complex semi-abelian variety $G$, the topological Euler characteristic of $X$ is {\it signed}, i.e.,
\be\label{e1} (-1)^n \cdot \chi(X) \geq 0.\ee
\noindent $(b)$ If $X$ is a closed subvariety of pure dimension $n$ of a complex semi-abelian variety $G$, which moreover is a local complete intersection, then\be\label{e2} (-1)^n \cdot \chi(X) \geq 0.\ee
\noindent $({c})$ If $X$ is a closed subvariety of pure dimension $n$ of a complex semi-abelian variety $G$, the  intersection homology Euler characteristic of $X$ is {\it signed}, i.e.,
\be\label{e3} (-1)^n \cdot I\chi(X) \geq 0.\ee
Indeed, in each of the above cases, there exists a perverse sheaf that can be extracted from the underlying geometric situation, namely, the shifted constant sheaf $\Q_X[n]$ in cases $(a)$ and $(b)$, and, respectively, the intersection cohomology complex $IC_X$ in case $({c})$.

\medskip

Since the statements of (\ref{e1})-(\ref{e3}) are purely topological, it is natural to attempt to justify them by more elementary methods. This was achieved in \cite{LMWa} in the case $(a)$, where non-proper Morse theoretic techniques developed by Palais-Smale were used to prove (\ref{e1}). The goal of this paper is to give a topological (Morse theoretic) explanation for (\ref{e2}) and (\ref{e3}). We will use methods similar to those developed in \cite{LMWa}, but placed in the context of the {\it stratified Morse theory} of Goresky-MacPherson \cite{GMP}. For technical reasons (properness of Morse functions), we restrict ourselves to the compact case, i.e., we prove (\ref{e2}) and (\ref{e3}) in the case when $X$  is a pure-dimensional closed subvariety of an abelian variety, see Theorems \ref{thmmain} and \ref{thmlci}. In this case, (\ref{e2}) and (\ref{e3}) are direct consequences of the existence of circle-valued Morse functions on $X$, satisfying certain good properties, which we  prove in Theorem \ref{thmexistence}.
We do, however, expect that our arguments extend to the general semi-abelian case by using non-proper stratified Morse theory. Our approach also applies (with only minor modifications) for proving similar statements in the analytic context, i.e., for subvarieties of compact complex tori (see Remark \ref{ana}). 

\medskip

The paper is organized as follows. In Section \ref{IH}, we recall the definition of intersection homology groups via allowability conditions, and derive some preliminary results on the corresponding intersection homology Euler characteristic. In Section \ref{SM}, we review the basic facts about stratified Morse theory. Section \ref{Ex} is the main technical part of the paper, and it is devoted to the construction of circle-valued Morse functions defined on a closed subvariety of an abelian variety. The proofs of (\ref{e2}) and (\ref{e3}) are given in Section \ref{PR}.

\medskip

\begin{ack} We thank Botong Wang, Mois\'es Herrad\'on Cueto and J\"org Sch\"urmann for valuable discussions.
\end{ack}

\section{Intersection homology}\label{IH}
Classically,  much of the manifold theory, e.g., Morse theory, Lefschetz theorems, Hodge decompositions, and especially Poincar\'e Duality, is recovered in the singular stratified context if, instead of the usual (co)homology, one uses Goresky-MacPherson's intersection homology groups \cite{GM1,GM2}. We recall here the definition of intersection homology of complex analytic (or algebraic) varieties, and discuss some preliminary results concerning the corresponding intersection homology Euler characteristic. For more details on intersection homology, the reader may consult, e.g., \cite{Fr} and the references therein.

Let $X$ be a purely $n$-dimensional complex analytic (or algebraic) variety with a fixed Whitney stratification. All strata of $X$ are of even real (co)dimension.
 By \cite{G}, $X$ admits a triangulation which is compatible with the stratification, so $X$ can also be viewed as a PL stratified pseudomanifold. 
Let $(C_*(X),\partial)$ denote the complex of finite PL chains 
on $X$, with $\Z$-coefficients. 
 The {\it intersection homology groups} of $X$, denoted $IH_i(X)$, are the homology groups of a complex of ``allowable chains'', defined by imposing restrictions on how chains meet the singular strata. Specifically, the chain complex $(IC_*(X),\partial)$ of allowable  finite PL chains is defined as follows: if $\xi \in C_i(X)$ has support $\vert \xi \vert$, 
 then $\xi \in IC_i(X)$ if, and only if, 
$$ \dim ( \vert \xi \vert \cap S ) < i - s $$
and  $$\dim ( \vert \partial\xi \vert \cap S ) < i - s -1,$$ for each stratum $S$ of complex codimension $s>0$. The boundary operator on allowable chains is induced from the usual boundary operator on chains of $X$. (The second condition above ensures that $\partial$ restricts to the complex of allowable chains.)

\br\label{r1}\rm The above definition of intersection homology groups corresponds to the so-called ``middle-perversity'', which will be used implicitly throughout this paper. It should also be mentioned that the intersection homology groups can be realized, as in \cite{Bo,GM2}, by the compactly supported hypercohomology of the (shifted) Deligne intersection sheaf complex $IC_X$. In fact, the intersection chain complex $(IC_*(X),\partial)$ is obtained (up to a shift) as $\Gamma_c(X,IC_X)$, i.e., global sections with compact support of the Deligne complex $IC_X$. Note also that although intersection homology is defined with reference to a fixed stratification, it is in fact independent of the choice of stratification and, moreover, it is a topological invariant (where, for proving this fact, the above-mentioned sheaf-theoretic realization of intersection homology is needed, together with the stratification independence of the Deligne complex, see \cite[Section 4.1]{GM2}).
\er

In what follows, we define a twisted version of the above intersection homology groups, which will be needed later on.  Let $\mathscr{L}$ be a {\it globally defined} rank-one $\Qt$-local system, i.e., a local system on $X$ with stalk $\Qt$, corresponding to a representation
$$
\rho:\pi_1(X)\longrightarrow \Aut_{\Qt}(\Qt)\cong\Qt^*
$$
Let $H=\pi_1(X)/\ker (\rho)$, and consider the covering space $X_H\longrightarrow X$ associated to the projection $\pi_1(X) \to H$. Hence the deck group of $X_H$ is isomorphic to $H$. Note that $X_H$ is a complex analytic space (it is not an algebraic variety in general). 
We can lift the stratification and PL-structure on $X$ to a stratification and PL-structure on $X_H$, so that the deck transformations  of $X_H$ are stratum-preserving PL homeomorphisms. It follows that the complex $IC_*(X_H)$ of allowable finite PL chains in $X_H$  
can be seen as a complex of free left modules over $\Z[H]$, where the action is given by deck transformations. 
Let us now define 
$$
IC_*(X,\mathscr{L}):=\Qt\otimes_{\Z[H]}IC_*(X_H)
$$
where the PID $\Qt$ is seen as a right $\Z[H]$-module, with corresponding action given by
$$
p(t)\cdot \alpha=p(t)\cdot \rho(\alpha)
$$
for every $p(t)\in\Qt$ and every $\alpha\in H$.
We define the twisted intersection homology groups $IH_*(X,\mathscr{L})$ as the homology groups associated to the complex $IC_*(X,\mathscr{L})$.  

\medskip

For the remainder of this section, we assume that $X$ is a complex algebraic variety. 
Then the intersection homology groups $IH_i(X)$ are finitely generated (e.g., see 
\cite{Bo}[V.10.13]), so the corresponding intersection homology Euler characteristic $I\chi(X)$ is well-defined by:
$$
I\chi(X)=\sum_{i=0}^{2n}(-1)^n\rank_\Z IH_i(X).$$
Furthermore, intersection homology groups can be computed simplicially, with respect to a {\it full} triangulation $T$ of $X$, i.e., there are group isomorphisms $IH_i(X) \cong IH_i^T(X)$, with $IH_i^T(X)$ denoting the {\it simplicial} intersection homology groups corresponding to the full triangulation $T$, see \cite[Theorem 3.3.20]{Fr}. Such full triangulations of a PL filtered space always exists, e.g., see \cite[Lemma 3.3.19]{Fr}.
\br \rm
Since we use finite chains to define intersection homology, it can be seen by using Remark \ref{r1} that $$(-1)^n \cdot I\chi(X)=\chi_c(X,IC_X),$$ where $\chi_c(X,IC_X)$ denotes the compactly supported Euler characteristic of the constructible complex $IC_X$. However, since it is known that $$\chi_c(X,IC_X)=\chi(X,IC_X)$$ (e.g., see \cite{Di}[Corollary 4.1.23]), there is no ambiguity in using the symbol $I\chi(X)$ to denote both the Euler characteristic of the intersection homology with compact support (defined by using finite simplicial chains, like in this paper) and the Euler characteristic of the intersection homology with closed support (defined by using locally finite simplicial chains, as in (\ref{e3}) of the Introduction). Throughout this paper, we use intersection homology with compact support due to its direct connection to stratified Morse theory. Obviously, the two versions of intersection homology coincide in the compact case. 
\er

In the above notations, we have the following:
\bp\label{propEuler} 
Let $X$ be a compact complex algebraic variety of pure dimension $n$. Then the $\Qt$-modules $IH_i(X,\mathscr{L})$ are finitely generated, and 
\be\label{23}
I\chi(X)=\sum_{i=0}^{2n}(-1)^n\rank_{\Qt} IH_i(X,\mathscr{L}).
\ee
\ep

\begin{proof} 
Twisted intersection homology groups of complex algebraic varieties are finitely generated over the respective PID base ring (e.g., see \cite{Bo}[V.10.13]). In the compact case considered here, a direct proof of this fact can be given as follows.

As already noted, intersection homology groups $IH_*(X)$ can be computed simplicially, provided that a full triangulation $T$ of $X$ is chosen. Let us assume, implicitly (and without any mentioning of the upperscript $T$), that such a full triangulation $T$ has been fixed on $X$, and we lift this triangulation to a full triangulation of $X_H$. The computations in the next paragraph are with respect to these fixed full triangulations on $X$ and $X_H$, respectively. 

Since $X$ is compact, the groups $IC_i(X)$ are free abelian of finite rank. Moreover, 
by construction, $IC_i(X_H)$ is a free $\Z[H]$-module of rank $\rank_\Z IC_i(X)$, where a basis of $IC_i(X_H)$ is given by a choice of a lift of each of the allowable $i$-simplices in the above full triangulation of $X$. Hence, $IC_i(X,\mathscr{L})$ is a free $\Qt$-module of rank equal to $\rank_\Z IC_i(X)$, so the homology groups $IH_i(X,\mathscr{L})$ are finitely generated $\Qt$-modules. 
Therefore, 
\begin{equation*}
\begin{split}
I\chi(X) &=\sum_{i=0}^{2n}(-1)^n\rank_\Z IC_i(X) \\ &=\sum_{i=0}^{2n}(-1)^n\rank_{\Qt} IC_i(X,\mathscr{L}) \\ &=\sum_{i=0}^{2n}(-1)^n\rank_{\Qt} IH_i(X,\mathscr{L}).
\end{split}
\end{equation*}
\end{proof}


Let us now consider an epimorphism $\vepsi:\pi_1(X)\longrightarrow \Z$. Note that $\vepsi$ induces a rank-one local system of $\Qt$-modules $\mathscr{L}_\vepsi$ on $X$ via the following representation, which we denote by $\widetilde{\vepsi}$:
$$
\f{\widetilde{\vepsi}}{\pi_1(X)}{\Aut_{\Qt}(\Qt)\cong \Qt^*}{\alpha}{t^{\vepsi(\alpha)}}
$$
Let $X^\vepsi\longrightarrow X$ be the infinite cyclic cover induced by $\ker (\vepsi)$. We then have the following result, analogous to \cite[Theorem 2.1]{KL}. The proof is a standard check.
\bp\label{propKL}
Let $\gamma\in\pi_1(X)$ be such that $\vepsi(\gamma)=1$. Then, the map $\phi_*$ is an isomorphism of complexes of $\Qt$-modules, where
$$
\f{\phi_i}{IC_i(X,\mathscr{L}_\vepsi)}{IC_i(X^{\vepsi})\otimes \Q}{t^{-k}\otimes c}{\gamma^k\cdot c}
$$
for all $c\in IC_i(X^{\vepsi})$, $k\in\Z$. Here, $\gamma^k\cdot c$ denotes the action of $\Z$ on $IC_i(X^{\vepsi})$ by deck transformations for all $k\in\Z$, and the $\Qt$-action on $IC_i(X^{\vepsi})$ is given by $t^k\cdot c:=\gamma^k\cdot c$.
\ep

The next result follows from \cref{propEuler} and \cref{propKL}.
\bc\label{corfg}
Let $X$ be a compact complex algebraic variety of pure dimension $n$, and let $\vepsi:\pi_1(X)\longrightarrow \Z$ be an epimorphism with associated infinite cyclic cover $X^\vepsi\longrightarrow X$.
If $IH_i(X^\vepsi)$ is a finitely generated abelian group for all $i\neq n$, then $(-1)^n \cdot I\chi(X)\geq 0$. Furthermore, $I\chi(X)= 0$ if $IH_i(X^\vepsi)$ is a finitely generated abelian group for all $i$.
\ec
\begin{proof}
If $IH_i(X^\vepsi)$ is a finitely generated abelian group for all $i\neq n$, then, when regarded as a module over $\Qt$, $IH_i(X^\vepsi)$ is torsion for all $i\neq n$. Hence, by \cref{propKL},  $IH_i(X,\mathscr{L}_\vepsi)$ is also a torsion $\Qt$-module, for all $i\neq n$. Therefore, \cref{propEuler} yields that 
$$
I\chi(X)=(-1)^n \rank_{\Qt} IH_n(X,\mathscr{L}_\vepsi), 
$$
so $(-1)^n I\chi(X) \geq 0$. The second part follows similarly.
\end{proof}
\br\rm\label{remusual}
The above result remains true if we replace $IH_*$ by $H_*$ and $I\chi$ by $\chi$, since the arguments in Propositions \ref{propEuler} and \ref{propKL} are also valid in this setting, see also \cite[Section 2]{LMWa}.
\er


\section{Stratified Morse theory: overview}\label{SM}
In this section, we review the basic facts about stratified Morse theory. While the standard reference for this part is \cite{GMP}, the reader may also consult \cite{Ha,massey,MT} for a brief introduction to the theory and its applications.

Let $Z$ be a Whitney stratified complex analytic variety, analytically embedded in a smooth complex manifold $M$. Fix a Whitney stratification $\s$ of $Z$, with the usual partial order given by $S_\alpha\leq S_\beta$ for $S_\alpha, S_\beta\in \s$ such that $S_\alpha\subset\overline{S_\beta}$. 

\begin{notation}\rm For every $x\in Z$, we denote by $S_x$ the stratum of $\s$ containing $x$. \end{notation}

\begin{defn}
Let $f:Z\longrightarrow \R$ be the restriction of a smooth function $\widetilde{f}:M\longrightarrow \R$. Then 
$x\in Z$ is a {\it critical point} of $f$ if $d\widetilde{f}$ is the zero map when restricted to $T_xS_x$.
\end{defn}

\begin{defn}
For each $S_\alpha\in\s$, we denote by $T^*_{S_\alpha}M$ the {\it conormal space} to $S_\alpha$ in $M$, that is, $T^*_{S_\alpha}M$ is a vector bundle over $S_\alpha$, whose fiber $(T^*_{S_\alpha}M)_x$ over a point $x\in S_\alpha$ consists of the cotangent vectors (covectors) in $(T^*M)_x$ which vanish on the tangent space $T_x S_\alpha$ of $S_{\alpha}$ at $x$.

The set of {\it degenerate conormal covectors} to a stratum $S_\alpha$ in $\s$ is defined as
$$
D_{S_\alpha}^*M:=T^*_{S_\alpha}M\cap\bigcup_{S_\alpha<S_\beta}\overline{T^*_{S_\beta}M}=\left( \bigcup_{S_\alpha<S_\beta}\overline{T^*_{S_\beta}M} \right)_{|S_\alpha} \ ,
$$
where the closure is taken inside of $T^*M$. Thus the fiber $(D_{S_x}^*M)_x$ consists of conormal covectors to $S_x$ at $x$ which vanish on limiting tangent spaces from larger strata.
\end{defn}

\begin{defn}\label{deffunction}
A {\it Morse function} $f:Z\longrightarrow \R$ is the restriction of a smooth function $\widetilde{f}:M\longrightarrow \R$ such that:
\begin{enumerate}
\item $f$ is proper and the critical values of $f$ are distinct, i.e.\ no two critical points of $f$ correspond to the same critical value.
\item for each stratum $S$ of $Z$, $f_{|S}$ is a Morse function in the classical sense, i.e., 
 the critical points of $f_{|S}$ are nondegenerate. 
\label{itemcritical}
\item for every critical point $x$ of $f$, $(d\widetilde{f})_x\notin D_{S_x}^*M$.
\label{itemconormal}
\end{enumerate}
\end{defn}

\br\rm
Note that if $\{x\}$ is a zero-dimensional stratum of $Z$, then $x$ is a critical point of any Morse function $f:Z\longrightarrow \R$.
\er

\begin{defn}
The {\it stratified Morse index} of a Morse function $f:Z\longrightarrow \R$ at a critical point $x \in S_x$ is defined as $i_x:=s+\lambda$, where $s$ is the complex codimension of the stratum $S_x$ in $Z$, and $\lambda$ is the (classical) Morse index of $f_{|S_x}$ at $x$.
\end{defn}
 
In what follows we summarize the main results of stratified Morse theory. Let  $f:Z\longrightarrow \R$ be a Morse function. For any $c \in \R$, set $$Z_{\leq c}:=f^{-1}\left( (-\infty, c] \right).$$
\bt\label{ta}\cite[SMT Part A]{GMP} \ 
As $c$ varies within the open interval between two adjacent critical values of $f$, the topological type of $Z_{\leq c}$ does not change.
\et

Let $x \in S_x$ be a critical point of $f$, and let $V_x \subset M$ be an analytic submanifold such that $V_x$ and $S_x$ intersect transversally, and $V_x \cap S_x =\{x\}$. Let $B_{\delta}(x)$ be a closed ball of radius $\delta$ with respect to some choice of local coordinates for $M$. 
The {\it normal slice of $S_x$} (at $x$) is defined as:
\begin{equation*}N_{\delta}:=B^{\circ}_{\delta}(x) \cap V_x \cap Z.\end{equation*}
\bt\label{tb}\cite[SMT Part B]{GMP} \ 
With the above notations and assumptions, if $1 \gg \delta \gg \varepsilon >0$ are sufficiently small, then $Z_{\leq f(x)+\varepsilon}$ is homeomorphic to the space obtained from  $Z_{\leq f(x)-\varepsilon}$ by attaching 
\be\label{md} (D^{\lambda} \times D^{k-\lambda}, \partial D^{\lambda} \times D^{k-\lambda}) \times \left( (N_{\delta})_{\leq f(x)+\varepsilon}, (N_{\delta})_{\leq f(x)-\varepsilon} \right),\ee
where $k$ is the real dimension of the stratum $S_x$ containing $x$, and $\lambda$ is the (classical) Morse index of $f_{|S_x}$ at $x$.  
\et
\br\rm The expression (\ref{md}) is called the {\it Morse data at $x$}, and it is the product of the {\it tangential Morse data} (the first factor) and the {\it normal Morse data} (the second factor). Here, by product of pairs we mean $(A,B) \times  (A',B')=(A \times A', A\times B' \cup A' \times B)$.
It is also shown in \cite{GMP} that the topological types of the normal slice $N_{\delta}$ and the normal Morse data are independent of $V_x$, $\delta$ and $\varepsilon$, and, moreover, the topological type of the normal Morse data is also independent of $f$.
\er

If $\delta$ is small enough, then by choosing local coordinates on $M$, one can regard the normal slice $N_{\delta}$ as sitting directly in some affine space $\C^m$. Let $$\pi:\C^m \to \C$$ be a linear projection with $\pi(x)=0$, so that ${\rm Re}(\pi)$ restricts to a Morse function on $N_{\delta}$ (the set of such projections is open and dense in the space of all linear maps $\C^m \to \C$). 
\begin{defn}\label{cl} Let $1 \gg \delta \gg \varepsilon >0$ be  sufficiently small, let $D_{\varepsilon} \subset \C$ be a closed disk of radius $\varepsilon$, and let $\xi \in \partial D_{\varepsilon}$. The {\it complex link} of the stratum $S_x$ containing $x$ is defined by:
$$\cL_x:=\pi^{-1}(\xi) \cap \overline{N_{\delta}}.$$
The {\it cylindrical neighborhood} of $x$ is defined as:
$$C_x:=\pi^{-1}(D_{\varepsilon}) \cap \overline{N_{\delta}}.$$
\end{defn}

\br\rm
The topological types of the complex link and cylindrical neighborhood in Definition \ref{cl} are independent of $\pi$, $\delta$, $\varepsilon$, $\xi$, and of the point $x$; they depend only on the stratum $S_x$. 
\er

In the above notations, $\pi_{| \pi^{-1}(\partial D_{\varepsilon}) \cap C_x}$ is a topological fiber bundle (i.e., the  {\it Milnor fibration}), with fiber the complex link $\cL_x$. Such a fibration is determined by its monodromy homeomorphism $\h:\cL_x \to \cL_x$, which can be chosen to be the identity near $\partial \cL_x$. 
\begin{defn} The {\it variation map} $${\rm Var}_j:IH_{j-1}(\cL_x,\partial \cL_x) \to IH_{j-1}(\cL_x)$$
is defined by assigning to any $[\gamma] \in IH_{j-1}(\cL_x,\partial \cL_x)$ the element of $IH_{j-1}(\cL_x)$ determined by the chain $\gamma-\h(\gamma) \in IC_{j-1}(\cL_x)$. \end{defn}
\begin{defn} If the stratum $S_x$ containing $x$ is a singular stratum with complex link $\cL_x$, the {\it Morse group} $A_x$ is the image of the variation map
$${\rm Var}_s:IH_{s-1}(\cL_x,\partial \cL_x) \to IH_{s-1}(\cL_x).$$ 
If $x$ is a point in the nonsingular part of $Z$, the Morse group $A_x$ is defined to be  the integers, $\Z$.\end{defn}


The following result relates intersection homology with stratified Morse theory.
\bt\label{thmvariation}\cite[Part II, 6.4]{GMP}
Let $Z$ be a complex analytic variety of pure dimension (and analytically embedded in a complex manifold $M$), and let $f:Z\longrightarrow \R$ be a Morse function with a non-degenerate critical point $x$. Let $i_x=s+\lambda$ be the stratified Morse index of $f$ at $x$, 
where $s$ is the complex codimension of the stratum $S_x$ containing $x$ and $\lambda$ is the (classical) Morse index of $f_{|S_x}$ at $x$. Suppose that the interval $[a,b]$ contains no critical values of $f$ other than $f(x)\in(a,b)$.
Then:
$$IH_i(Z_{\leq b},Z_{\leq a})=
\left\{\begin{array}{lr}
A_x &  \text{if }i=i_x\\
0 &  \text{if }i\neq i_x
\end{array}\right.
$$
where $A_x$ is the Morse group associated to the stratum $S_x$ containing $x$.  
\et

\br\rm\label{clv} If we replace intersection homology by homology then, under the above assumptions, we have (cf. \cite[page 211]{GMP}):
\be\label{hrel}
H_i(Z_{\leq b},Z_{\leq a})\cong \widetilde{H}_{i-\lambda-1}(\cL_x),
\ee
where $\cL_x$ is the complex link associated to the stratum $S_x$ containing $x$. This is a consequence of Theorem \ref{tb}, together with the fact that the normal Morse data of $f$ at a critical point $x$ has the homotopy type of the pair $({\rm cone}(\cL_x),\cL_x)$, see \cite[Part II, 3.2]{GMP}. 
In particular, if $X$ is a {\it local complete intersection} of pure dimension $n$ and the interval $[a,b]$ contains no critical values of $f$ other than $f(x)\in(a,b)$, then 
\be\label{lci}
H_i(Z_{\leq b},Z_{\leq a})=0,
\ee
for all $i \neq i_x=s+\lambda$, where $s$ is the complex codimension of the stratum $S_x$ containing the critical point $x$ and $\lambda$ is the Morse index of $f_{|S_x}$ at $x$. Indeed, in this case, the complex link $\cL_x$ associated to the stratum $S_x$ has the homotopy type of a bouquet of spheres of dimension $s-1$, see \cite{Le}.
\er

\br\label{relinthom}\rm
Let us briefly comment on the meaning of the relative intersection homology groups in Theorem \ref{thmvariation}. 
In standard references, intersection homology is defined for pseudomanifolds without boundary, 
and relative intersection homology groups $IH_i(Z,U)$  are defined for pairs $(Z,U)$, with $U$ an open subspace of $Z$ (with the induced stratification). These relative intersection homology groups then fit into a long exact sequence for pairs, and respectively, triples $(Z,U,V)$  if $V \subset U$ is open.

More generally, relative intersection homology groups $IH_i(Z,U)$ can be defined if $U \subset Z$ is {\it smoothly enclosed}, in the sense that if $Z$ is analytically embedded in some ambient complex analytic manifold $M$ then $U=Z \cap M'$, where $M'$ is a complex analytic submanifold of $M$ of $\dim M'=\dim M$, whose boundary $\partial M'$ intersects each stratum of $Z$ transversally. Furthermore, if $V \subset U$ is smoothly enclosed, then the resulting relative intersection homology groups fit into long exact sequences for pairs and, respectively, triples. For more details, see \cite[Section 1.3.2]{MP}.

Now, if $\widetilde{f}:M\to \mathbb{R}$ restricts to a Morse function $f$ on $Z$, and $a \in \mathbb{R}$ is not a critical value for $f$, then $Z_{\leq a}=Z \cap \widetilde{f}^{-1}\left((-\infty,a]\right)$ is smoothly enclosed in $Z$ since the boundary $\widetilde{f}^{-1}(a)$ of $\widetilde{f}^{-1}\left((-\infty,a]\right)$ is the preimage of a regular value, hence a submanifold of $M$ transverse to each stratum of $Z$.
In particular, one can make sense of the relative intersection homology groups  appearing in the statement of Theorem \ref{thmvariation}.

Alternatively, if $(X,\partial X)$ is a pseudomanifold with collared boundary $\partial X$, one can define $IH_i(X)$ as in \cite{GM}, by replacing $X$ by $X^{\circ}:=X \setminus \partial X$. Indeed, if $T$ is an open collared neighborhood of $\partial X$, then $X^{\circ}$ is a pseudomanifold, and $T^{\circ}:=T \setminus \partial X$ is an open subspace. Furthermore, one can construct a stratum-preserving homotopy equivalence of pairs $(X,\partial X) \sim (X^{\circ}, T^{\circ})$, therefore $IH_i(X)\cong IH_i(X^{\circ})$ and  $IH_i(X,\partial X)\cong IH_i(X^{\circ}, T^{\circ})$. 
\er

\medskip

The following circle-valued version of a Morse function will be needed in the sequel.

Let $\widetilde{f}:M\longrightarrow S^1$ be a smooth function. We can identify $S^1$ with $\R/\Z$ and consider $\widetilde{f}$ as a multi-valued real function. With this consideration in mind, we can define circle-valued Morse functions as follows:
\begin{defn}
A {\it circle-valued Morse function} $f:Z\longrightarrow S^1$ is the restriction of a smooth function $\widetilde{f}:M\longrightarrow S^1$ such that:
\begin{enumerate}
\item $f$ is proper and the critical values of $f$ are distinct, i.e., no two critical points of $f$ correspond to the same critical value (modulo $\Z$).
\item for each stratum $S$ of $Z$, the critical points of $f_{|S}$ are nondegenerate (i.e., if $\dim(S)\geq 1$, the Hessian matrix of $f_{|S}$ is nonsingular).
\label{itemcritical}
\item for every critical point $x$ of $f$, $(d\widetilde{f})_x\notin D_{S_x}^*M$.
\label{itemconormal}
\end{enumerate}
\label{defcircle}
\end{defn}


\section{Existence of circle-valued Morse functions}\label{Ex}
Let $X$ be a pure $n$-dimensional closed subvariety of an abelian variety $G$ of complex dimension $N$. 
Let $\Gamma$ be the space of left invariant holomorphic $1$-forms on $G$.  Then $\Gamma$ is a complex affine space of  dimension $N$. Moreover, since $G$ is a complex abelian Lie group, every left invariant form is closed. 

\begin{defn}
We say that a real $1$-form $\mu$ on $G$ has a {\it singularity} at $x\in X$ if any smooth function $f:V_x\longrightarrow \R$ defined on a  neighborhood $V_x\subset S_x$ of $x$ such that $df=\mu_{|V_x}$, has a critical point at $x$. We say that $\mu$ has a {\it regular singularity} at $x\in X$ if any $f$ as above has a nondegenerate critical point at $x$. (Here, $S_x$ denotes as before the stratum of $X$ containing the point $x$.)
\end{defn}

\bl\label{lemzariski1}
There exists a Zariski open set $U\subset\Gamma$ such that for every $\eta\in U$, there are a finite number of singularities of $\re(\eta)$ in $X$.
\el

\begin{proof}
We follow here the notations from \cite[Section 6]{LMWa}. For each stratum $S\in \s$, we define the degenerating locus in $S\times\Gamma$ as
$$
Z_S:=\{(x,\eta)\in S\times\Gamma\ |\ \eta_{|S} \text{ vanishes at }x \}
$$
Since $S$ is locally closed in the Zariski topology of $G$, and the degenerating condition is given by algebraic equations, $Z_S$ is an algebraic set of $S\times\Gamma$. Since $S$ is smooth, the first projection $p_1:Z_S\longrightarrow S$ is a complex vector bundle whose fiber has (complex) dimension $N-\dim_{\C}(S)$. Hence $Z_S$ is  smooth manifold and $\dim_\C Z_S=N$.

Consider now the second projection $p_2:Z_S\longrightarrow \Gamma$. By the Cauchy-Riemann equations, and the fact that the elements of $\Gamma$ are holomorphic $1$-forms, the points $(x,\eta)\in p_2^{-1}(\eta)$ correspond to the points $x\in S$ such that $\re(\eta)_{|S}$ has a singularity at $x$. By Verdier's generic fibration theorem \cite[Corollary 5.1]{Ve} (and using the fact that $\dim_{\C} Z_S=\dim_{\C} \Gamma$), 
there is a Zariski open subset $U_S$ of $\Gamma$ such that $p_2^{-1}(\eta)$ consists of a finite (or 0) number of points for all $\eta$ in $U_S$. Since there are a finite number of strata in $X$, the open set $U:=\bigcap_{S\in\s}U_S$ satisfies the properties required in the statement of the Lemma.
\end{proof}

Let $U$ be as in \cref{lemzariski1}, and let $Z_S$ and $p_2:Z_S\longrightarrow \Gamma$ be defined as in the proof of \cref{lemzariski1}, that is,
$$
Z_S:=\{(x,\eta)\in S\times\Gamma\ |\ \eta_{|S} \text{ vanishes at }x \}
$$
for all $S\in \s$, and $p_2:Z_S\longrightarrow \Gamma$ is the projection. Recall that $Z_S$ is smooth and $\dim_\C Z_S=N$. Let $\eta\in U$. As discussed in the proof of \cref{lemzariski1}, we have that
$$
p_2^{-1}(\eta)=\{(x,\eta)\ |\ \re(\eta)_{|S} \text{ has a singularity at } x\}.
$$

\bl\label{lemtrans}
There exists a Zariski open set $U'_S\subset U$ such that for all $\eta\in U'_S$, $Z_S$ intersects $S\times\{\eta\}$ transversally at all points of intersection.
\el
\begin{proof}
We use generic smoothness in the target (e.g., see \cite[Theorem 25.3.3]{ravi}) to see that there exists a Zariski open subset $U'_S\subset U$ such that $(p_2)_{|p_2^{-1}(U'_S)}:p_2^{-1}(U'_S)\longrightarrow U'_S$ is a smooth morphism. Note that $p_2^{-1}(U'_S)$ could be empty, in which case for any $\eta\in U'_S$, $Z_S$ does not intersect $S\times\{\eta\}$, so the lemma would then be true trivially.

Assume now that $p_2^{-1}(U'_S)$ is not empty. In this case, $p_2:p_2^{-1}(U'_S)\longrightarrow U'_S$ is a surjective submersion. In the proof of \cref{lemzariski1}, we showed that $p_2:p_2^{-1}(U)\longrightarrow U$ has finite fibers, so
$$
p_2:p_2^{-1}(U'_S)\longrightarrow U'_S
$$
is a finite covering map.

Let $\eta\in U'_S$, and let $(x,\eta)\in p_2^{-1}(\eta)=Z_S\cap \left(S\times\{\eta\}\right)$. Since $Z_S$ and $S\times\{\eta\}$ have complimentary dimensions in $S\times\Gamma$, they intersect transversally at $(x,\eta)$ if and only if
\be\label{tr}
T_{(x,\eta)}Z_S\cap T_{(x,\eta)}\left(S\times\{\eta\}\right)=0,
\ee
where both tangent spaces are seen in $T_{(x,\eta)}\left(S\times\Gamma\right)$. We have the following commutative triangle
$$
\begin{tikzcd}
Z_S \arrow[r,hook,"i"]\arrow[d,"p_2"] & S\times\Gamma \arrow[ld,"p_2'"]\\
\Gamma & \\
\end{tikzcd}
$$
where $p_2':S\times\Gamma\longrightarrow \Gamma$ is the projection. Let $v\in T_{(x,\eta)}Z_S\cap T_{(x,\eta)}\left(S\times\{\eta\}\right)$. 
Since $v\in T_{(x,\eta)}\left(S\times\{\eta\}\right)$, we have that
$$
(dp_2')_{(x,\eta)}(v)=0.
$$
Since $v \in  T_{(x,\eta)}Z_S$, by the commutativity of the above triangle, we get that 
\be\label{tr2}
(dp_2)_{(x,\eta)}(v)=0.
\ee
Furthermore, since $\eta\in U'_S$, $(dp_2)_{(x,\eta)}$ is an isomorphism. So (\ref{tr2}) implies that $v=0$. This proves (\ref{tr}) and the Lemma.
\end{proof}

\bl\label{lemzariski2}
There exists a Zariski open set $U'$ in $\Gamma$ such that for every $\eta \in U'$, all the singularities of $\re(\eta)$ in $X$ are regular singularities, and there is a finite number of them.
\el
\begin{proof}
Consider the Zariski open set  $$U':=\bigcap_{S\in \s}U'_S,$$ where $U'_S$ are defined as in \cref{lemtrans}. Note that $U'\subset U$, where $U$ is as in \cref{lemzariski1}, so for every $\eta\in U'$, $\re(\eta)$ has a finite number of singularities in $X$. The only thing left to prove is that the $1$-form $\re(\eta)_{|S_x}$ has a regular singularity at $x\in X$ if $Z_{S_x}$ intersects $S_x\times\{\eta\}$ transversally at $(x,\eta)$, and then apply \cref{lemtrans}. The rest of the proof is devoted to show this fact.

After a change of (complex) coordinates, we can assume that locally, near the point $x\in S$, there is a system of coordinates $(z_1,\ldots,z_N)$ of $G$ such that $S$ is given by the equations $z_1=\ldots=z_{N-k}=0$, where $k=\dim_\C S$. Let $l_1,\ldots,l_N$ be a basis of left-invariant holomorphic $1$-forms (i.e., a basis of $\Gamma$), and  endow $\Gamma$ with the coordinates $(\alpha_1,\ldots,\alpha_N)$ corresponding to the fixed basis $l_1,\ldots,l_N$.

We consider both $Z_S$ and $S\times\{\eta\}$ as subspaces of $G\times\Gamma$, and we identify $T_{(x,\eta)}\left(G\times\Gamma\right)$ with $T_{x}G\times T_{\eta}\Gamma$. We consider $T_{x}G$ and $T_{\eta}\Gamma$ as vector spaces with the natural fixed bases coming from the $z_i$'s and $\alpha_i$'s, respectively.

We can locally express the $l_i$'s in the basis $(dz_1,\ldots,dz_N)$ as
$$
l_i=\sum_{j=1}^N \beta_{ij}(z_1,\ldots,z_N)dz_j,
$$
for some holomorphic functions $\beta_{ij}$, for $i,j=1,\ldots,N$.

Note that
$$
T_{(x,\eta)}\left(S\times\{\eta\}\right)=\{(v,0)\in T_{x}G\times T_{\eta}\Gamma\text{ }|\text{ }\text{the first }N-k \text{ coordinates of }v\text{ are }0\}.
$$
Furthermore, the equations that cut out $Z_S$ near $(x,\eta)$ are
$$
\begin{array}{ccccc}
z_1=0 \ ,  \ldots  , \ z_{N-k}=0,
\end{array}
$$
and
$$
\left(\sum_{i=1}^N \alpha_i l_i(0,\ldots,0,z_{N-k+1},\ldots,z_N)\right)_{|S}=0.
$$
This is equivalent to the following system of equations in $z_1,\ldots,z_N,\alpha_1,\ldots,\alpha_N$:
$$
\left\{\begin{array}{lll}
z_i=0 & \text{\ \ \ for }i=1,\ldots,N-k,\\
\sum_{i=1}^N \alpha_i\beta_{ij}(0,\ldots,0,z_{N-k+1},\ldots,z_N)=0 & \text{\ \ \ for }j=N-k+1,\ldots, N.\\
\end{array}\right.
$$
The tangent space $T_{(x,\eta)}Z_S$ is given by
$$
T_{(x,\eta)}Z_S=\left\{(v,w)\in T_{x}G\times T_{\eta}\Gamma\mid (df)_{(x,\eta)}(v,w)=0 \text{ for all }f\text{ cutting out }Z_S\right\},
$$
so, if $\eta=\sum_{i=1}^N a_il_i$, we have that
\begin{multline*}
T_{(x,\eta)}Z_S=\\ \left\{(0,\ldots,0,v_{N-k+1},\ldots,v_N,w_1,\ldots,w_N)\mid\begin{array}{c}\sum\limits_{r=N-k+1}^N\left(\sum\limits_{i=1}^N a_i\frac{\partial \beta_{ij}}{\partial z_r}(x)\right)v_r+\sum\limits_{i=1}^N\beta_{ij}(x)w_i=0, \\j=N-k+1,\ldots,N\end{array}\right\}.
\end{multline*}
By the above descriptions of $T_{(x,\eta)}Z_S$ and $T_{(x,\eta)}\left(S\times\{\eta\}\right)$, we get that
\begin{multline*}
T_{(x,\eta)}Z_S\cap T_{(x,\eta)}\left(S\times\{\eta\}\right)=\\ \left\{(0,\ldots,0,v_{N-k+1},\ldots,v_N,0,\ldots,0)\mid\begin{array}{c}\sum\limits_{r=N-k+1}^N\left(\sum\limits_{i=1}^N a_i\frac{\partial \beta_{ij}}{\partial z_r}(x)\right)v_r=0\\j=N-k+1,\ldots,N\end{array} \right\}.
\end{multline*}
Let $V_x\subset S$ be a neighborhood of $x$ in $S$. If $f:V_x\longrightarrow \C$ is a holomorphic function such that $df=\eta_{|{V_x}}$, then 
the Hessian matrix of $f$ at the point $x$ is exactly the matrix $H$ with entries
$$
h_{jr}=\sum\limits_{i=1}^N a_i\frac{\partial \beta_{ij}}{\partial z_r}(x)
$$
for $i,j=N-k+1,\ldots,N$. Hence, $x$ is a non-degenerate critical point of $f$ if and only if there is no non-zero $(v_{N-k+1},\ldots,v_N)$ such that
$$
H\cdot\left(\begin{array}{c}
v_{N-k+1}\\
\vdots\\
v_N
\end{array}\right)=\left(\begin{array}{c}
0\\
\vdots\\
0\\
\end{array}\right)
$$
This happens if and only if $T_{(x,\eta)}Z_S\cap T_{(x,\eta)}\left(S\times\{\eta\}\right)$ is only the $0$ vector, which is true by \cref{lemtrans}.

We have shown that, if $\eta\in U'$, then for every singularity $x$ of $\re(\eta)$ and for every holomorphic function $f$ defined locally near $x \in S$ so that $df=\eta$ in a neighborhood of $x$,
$x$ is a non-degenerate critical point of $f$. Hence, by the holomorphic Morse lemma, $x$ is a non-degenerate critical point of $\re(f)$ of Morse index $\dim_\C(S)$. Since $d(\re(f))=\re(\eta)$ in a neighborhood of $x$, we deduce that $x$ is a regular singularity of $\re(\eta)$. 
\end{proof}

\bl\label{lemzariski3}
There exists a Zariski open set $U''$ in $\Gamma$ such that for every $\eta\in U''$, $\re(\eta)_x\notin D^*_{S_x}G$ for any $x\in X$.
\el
\begin{proof}
Let $S_\alpha,S_\beta\in\s$ such that $S_\alpha\subset \overline{S_\beta}$. We need to show that there exists a Zariski open subset $U_{\alpha,\beta}$ of $\Gamma$ such that, if $\eta\in U_{\alpha,\beta}$, then
$$
\re(\eta)_x\notin T^*_{S_\alpha}G\cap\overline{T^*_{S_\beta}G}
$$
for any $x\in S_\alpha$. Then the Zariski open set $U''$ we are looking for will be defined as the intersection of all of these $U_{\alpha,\beta}$ corresponding to pairs of strata $S_\alpha,S_\beta\in\s$ such that $S_\alpha\subset \overline{S_\beta}$ (there are finitely many such pairs).

Since $G$ is a complex Lie group, its cotangent bundle is trivial, and we can identify $G\times\Gamma$ with $T^*G$. If we see $S\times\Gamma$ as a locally closed Zariski set in $G\times \Gamma$, we can identify $Z_S$ (as defined in the proof of \cref{lemzariski1}) with $T^*_S G$ for any stratum $S\in\s$.

With the above identifications, we have that $T^*_{S_\alpha}G\cap\overline{T^*_{S_\beta}G}=Z_{S_\alpha}\cap\overline{Z_{S_\beta}}$, where the closure is taken with respect to the Zariski topology. Notice that, as discussed in the proof of \cref{lemzariski1}, $\dim_\C\left(Z_{S_\alpha}\right)=\dim_\C\left(Z_{S_\beta}\right)=N$. Also, $Z_{S_\beta}$ is a smooth variety, so it is irreducible, and thus its Zariski closure $\overline{Z_{S_\beta}}$ in $T^*G$ is an irreducible $N$-dimensional variety as well. Hence, $\overline{Z_{S_\beta}}\backslash Z_{S_\beta}$ is a variety of dimension
\begin{equation}
\dim_\C\left(\overline{Z_{S_\beta}}\backslash Z_{S_\beta}\right)\leq N-1.
\label{eqndim}
\end{equation}
This also shows that the Zariski closure of $Z_{S_\beta}$ is the same as the closure in the usual complex topology, because an analytic neighborhood of any point in $\overline{Z_{S_\beta}}\backslash Z_{S_\beta}$ (where the closure is in the Zariski topology) cannot be contained in an ($N-1$)-dimensional space. From now on, we will not make any distinction between the closures in the two topologies.

Notice also that $Z_{S_\alpha}\cap\overline{Z_{S_\beta}}\subset \overline{Z_{S_\beta}}\backslash Z_{S_\beta}$, so $Z_{S_\alpha}\cap\overline{Z_{S_\beta}}$ is a constructible set (i.e., a finite union of locally closed sets) of dimension at most $N-1$.

In the proof of \cref{lemtrans}, we showed that there exists a Zariski open subset $U'_{S_\alpha}$ such that the morphism of varieties given by the projection onto the second coordinate
$$
p_2:Z_{S_\alpha}\cap p_2^{-1}(U'_{S_\alpha})\longrightarrow U'_{S_\alpha}
$$
is a finite covering map, so in particular it is \' etale, and thus finitely presented. By Chevalley's theorem on constructible sets (\cite[7.4.2]{ravi}), $p_2(Z_{S_\alpha}\cap\overline{Z_{S_\beta}}\cap p_2^{-1}(U'_{S_\alpha}))$ is a constructible subset (in the Zariski topology) of $U'_{S_\alpha}$. By equation (\ref{eqndim}), this image cannot contain a Zariski open subset of $U'_{S_\alpha}$. Thus, $p_2(Z_{S_\alpha}\cap\overline{Z_{S_\beta}}\cap p_2^{-1}(U'_{S_\alpha}))$ is contained in some proper closed set $C$.

Let $U_{\alpha,\beta}:=U'_{S_\alpha}\backslash C$. We have proved that if $\eta\in U_{\alpha,\beta}$, then $\eta_x\notin T^*_{S_\alpha}G\cap\overline{T^*_{S_\beta}G}$ for any $x\in S_\alpha$. To conclude the proof, it remains to see that the latter fact implies that $\re(\eta)_x\notin T^*_{S_\alpha}G\cap\overline{T^*_{S_\beta}G}$, where the bundles are now seen as bundles of real manifolds. 

We have the following globally defined maps, whose composition is an isomorphism of real bundles:
$$
\begin{array}{ccccc}
(T^* G)_{\R} & \longrightarrow & (T^* G)_{\R}\otimes_\R \C\cong (T^* G)_{\R}\oplus i (T^* G)_{\R}& \longrightarrow & (T^* G)_{\C}\\
\end{array}
$$
where the first map is given by the inclusion into the first summand, and the second map is given by the projection onto the holomorphic part. The subscripts $\R$ and $\C$ are added to distinguish between real and holomorphic bundles. 
Let $z_j=x_j+iy_j$ be holomorphic coordinates in $G$ near a point $p\in X$, for $j=1,\ldots, N$, such that the equations cutting out $S_p$ near $p$ are $z_1=\ldots=z_N-k=0$, where $k=\dim_\C(S_p)$. Locally, the above composition takes the form
$$
\begin{array}{ccccc}
(T^* G)_{\R} & \longrightarrow & (T^* G)_{\R}\otimes_\R \C\cong (T^* G)_{\R}\oplus i (T^* G)_{\R}& \longrightarrow & (T^* G)_{\C}\\
dx_j&\mapsto &dx_j=\frac{dz_j+d\overline{z_j}}{2}& & \\
dy_j&\mapsto &dy_j=\frac{dz_j-d\overline{z_j}}{2i}& & \\
& & dz_j&\mapsto& dz_j\\
& & d\overline{z_j}&\mapsto& 0\\
\end{array}
$$
Hence, this isomorphism of (real) bundles takes $(T_{S_p}^* G)_\R$ to $(T_{S_p}^* G)_\C$ for all $p$ in $X$. Thus, this isomorphism takes $(T^*_{S_\alpha}G\cap\overline{T^*_{S_\beta}G})_\R$ to $(T^*_{S_\alpha}G\cap\overline{T^*_{S_\beta}G})_\C$, and $\re(\eta)_x$ to $\frac{1}{2}\eta_x$. So, if $\eta\in U_{\alpha,\beta}$, then $$\re(\eta)_x\notin (T^*_{S_\alpha}G\cap\overline{T^*_{S_\beta}G})_\R$$ for any $x \in S_\alpha$.
\end{proof}

The proof of the next lemma follows from standard arguments in classical Morse theory using step functions (e.g., see \cite[Lemma 2.8]{cobordism}), which can be translated to the stratified setting by using Whitney's condition (A). We include the details for the convenience of the reader.

\bl\label{lemperturb}
Let $\widetilde{f}:M \to S^1$ be a smooth map, so that $f=\widetilde{f}_{|Z}:Z\longrightarrow S^1$ is a circle-valued function satisfying all of the conditions in \cref{defcircle} except for maybe having distinct critical values. Suppose $f$ has a finite number of critical points. Then, we can perturb $\widetilde{f}$ to find a function $\widetilde{h}:M\longrightarrow S^1$ such that $h:=\widetilde{h}_{|Z}:Z \to S^1$ satisfies the following properties:
\begin{itemize}
\item $h$ agrees with $f$ outside of a small neighborhood of the critical points.
\item the set of critical points of $f$ and $h$ coincide.
\item $h:Z\longrightarrow S^1$ is a circle-valued Morse function (in particular, the critical values of $h$ are all distinct).
\item the stratified Morse index of $f$ at a critical point is the same as that of $h$.
\end{itemize}
\el
\begin{proof}
Suppose that $f(p_1)=f(p_2)$ for two distinct critical points $p_1$ and $p_2$ of $f$. 

Let $\lambda:M\longrightarrow[0,1]$ be a smooth function such that
\begin{itemize}
\item $\lambda$ is identically $1$ in a small neighborhood $U$ of $p_2$.
\item $\lambda$ is identically $0$ outside of a larger neighborhood $V$ of $p_2$, with $U\subset V$.
\item $\overline V$ is compact.
\item There are no critical points of $f$ in $\overline V$ other than $p_2$. 
\end{itemize}

Let $K=\overline V\backslash U$, which is a compact set, and fix a Riemannian metric $||\cdot||_M$ on $M$, which induces metrics $||\cdot||_S$ on every stratum $S$ of the fixed Whitney stratification $\s$ of $Z$. We first show that there exist positive constants $c,c'$ such that
\begin{equation}
0<c\leq ||\nabla_x f_{|S_x}||_{S_x} \ \ \ {\rm and} \ \ \ 
||\nabla_x \lambda_{|S_x}||_{S_x}\leq c'
\label{eqnbounds}
\end{equation}
for every $x\in Z\cap K$, where $\nabla_x f_{|S_x}$ denotes the gradient of $f_{|S_x}$ at the point $x$ (recall that we are considering $f$ as a multi-valued real function). The existence of these constants is trivial in the non-stratified case discussed in \cite[Lemma 2.8]{cobordism} given the fact that $K$ is compact, but in our case $S_x\cap K$ is not necessarily compact, so more care is needed.

By compactness, we can find $c'>0$ such that $||\nabla_x \lambda||_M \leq c'$ for all $x\in Z\cap K$. Now, we  just have to use that
$$
{||\nabla_x \lambda_{|S_x}||}_{S_x}\leq ||\nabla_x \lambda||_M \leq c'  ,
$$
which gives the constant $c'$. To find $c$, we argue by contradiction. Suppose that there exists a subsequence of points $\{x_i\}\in Z\cap K$ such that
$$\lim_{i\to\infty}||\nabla_{x_i} f_{|S_{x_i}}||_{S_{x_i}}=0.$$
By passing to a subsequence, we can assume that
\begin{itemize}
\item $\{x_i\}\subset S_\beta\cap K$ for some $S_\beta\in\s$.
\item $\lim_{i\to\infty}x_i=x$ for some $x\in S_\alpha\cap K$, where $S_\alpha\in\s$ satisfies that $S_\alpha\subset \overline{S_\beta}$.
\item The tangent planes $T_{x_i}S_\beta$ converge to a $(\dim_\R(S_\beta))$-plane $T$ as $i \to \infty$.
\end{itemize}
Since $\s$ is a Whitney stratification, Whitney's condition (A) yields that
$$
T_x S_\alpha\subset T=\lim_{i\to \infty} T_{x_i}S_\beta  .
$$
Let $v\in T_x S_\alpha$ with $||v||_{S_\alpha}=1$. Since $T_x S_\alpha\subset T$, there exist $v_i\in T_{x_i} S_\beta$ such that $\lim_{i\to\infty}v_i=v$, and we can take these $v_i$ such that $||v_i||_{S_\beta}=1$ for all $i$. We have that
$$
df_{|S_\alpha}(v)=df(v)=\lim_{i\to \infty} df(v_i)=\lim_{i\to \infty} df_{|S_\beta}(v_i)=\lim_{i\to \infty}\langle \nabla_{x_i} f_{|S_{\beta}},v_i\rangle_{S_\beta}
$$
and
$$
|\lim_{i\to \infty}\langle \nabla_{x_i} f_{|S_{\beta}},v_i\rangle_{S_\beta}|=\lim_{i\to \infty}|\langle \nabla_{x_i} f_{|S_{\beta}},v_i\rangle_{S_\beta}|\leq \lim_{i\to\infty}||\nabla_{x_i} f_{|S_{\beta}}||_{S_{\beta}}=0,
$$
so
$$
df_{|S_\alpha}(v)=0.
$$
Since this last equality is true for all $v\in T_x S_\alpha$ with $||v||_{S_\alpha}=1$, we get that $f_{|S_\alpha}$ has a critical point at $x$, which contradicts our assumption that $f$ has no critical points on $K$. This concludes the proof of the existence of positive constants $c,c'$ as in (\ref{eqnbounds}).

Let $\widetilde g=\widetilde f+\vepsi\lambda:M\longrightarrow S^1$ for some $\vepsi>0$, and let $g:Z\longrightarrow S^1$ be its restriction to $Z$. We have that, for all $x\in Z\cap K$,
$$
||\nabla_x(f+\vepsi\lambda)_{|S_x}||_{S_x}\geq ||\nabla_x f_{|S_x}||_{S_x}-\vepsi||\nabla_x \lambda_{|S_x}||_{S_x}\geq c-\vepsi c' , 
$$
so, for $\vepsi$ small enough (smaller than $\frac{c}{c'}$),  $g$ does not have any critical values on $Z\cap K$. Note that, for $\vepsi$ small enough, $g$ satisfies all of the conditions in \cref{defcircle} except for maybe having distinct critical values. Moreover, we can pick $\vepsi$ small enough so that $g(p_2)\neq g(p)$ for any $p\neq p_2$ critical point of $f$ (or $g$). 

The desired function $h$ is constructed after a finite number of such steps, since $f$ only has a finite number of critical points.
\end{proof}

\br\label{remhe}\rm
Using the notation in the proof of \cref{lemperturb}, note that $\widetilde g$ is homotopy equivalent to $\widetilde f$ by the homotopy
$$
\f{H}{M\times[0,1]}{S^1}{(x,t)}{f(x)+t\vepsi\lambda(x)}
$$
Hence, in the same notation as \cref{lemperturb}, the circle Morse function $h:Z\longrightarrow S^1$ is homotopy equivalent to $f$.
\er

We can now prove the main result of this section, on the existence of circle-valued Morse functions on closed subvarieties of an abelian variety. 

For this discussion, assume that $X$ is connected and contains the identity element $e$ of $G$, and let $i: X \hookrightarrow G$ denote the inclusion.
Let $\eta \in \Gamma$ be a left-invariant holomorphic $1$-form on $G$ with integral real part, i.e., \ with $\re(\eta)$ belonging to the image of the natural map:
$$H^1(G;\Z) \rightarrow H^1(G;\R).$$
(As we will see below, the set of such $\eta$ is Zariski dense in $\Gamma$.) 
Let $\Lambda \subset \Z$ denote the image of $i_*H_1(X;\Z)$ under $\Re(\eta): H_1(G; \R) \rightarrow \R$; $\Lambda$ is either $0$ or a lattice in $\R$.
We then obtain a well-defined integration map:
\be\label{ph}
\f{\phi_\eta}{X}{\R/\Lambda}{x}{\int_{e}^{x}\re(\eta)}
\ee
where the integral is taken over any path in $X$ from $e$ to $x$. 
We gather some properties of $\phi_\eta$:
\begin{enumerate}[(i)]
\item if $T$ is an open regular neighborhood of $X$ in $G$ (see \cite{Du} for definition and existence), then the fact that the inclusion $X \hookrightarrow T$ is a homotopy equivalence implies that $\phi_\eta$ naturally extends to a smooth map $T \rightarrow \R/\Lambda$; furthermore the differential of this extension at any $x \in X$ is equal to $\Re(\eta)_x$, after we have made the natural identification $T_{\phi_\eta(x)}(\R/\Lambda) = \R$.
\item the induced map $(\phi_\eta)_*: H_1(X;\Z) \rightarrow H_1(\R/\Lambda;\Z) = \Lambda$ is exactly equal to the surjection $\Re(\eta) \circ i_*: H_1(X;\Z) \twoheadrightarrow \Lambda$.     
\end{enumerate}
Property (i) provides the essential information to decide whether $\phi_\eta$ is stratified Morse and property (ii) will simplify several arguments in the next section.
The next result shows that we can pick $\eta$ so that a small perturbation of the function $\phi_\eta$ defined by (\ref{ph}) is a stratified Morse function. 
\begin{theorem}\label{thmexistence}
If $X$ is a connected, pure $n$-dimensional subvariety of an abelian variety $G$, there exist circle-valued Morse functions $f : X \longrightarrow S^1$ such that:
\begin{itemize}
\item $f$ can be extended to a regular neighborhood of $X$ in $G$.
\item the induced map $f_*: H_1(X;\Z) \rightarrow H_1(S^1;\Z)$ is surjective if $n > 0$.
\item the stratified Morse index of $f$ at every critical point is $n$.
\end{itemize}  
\end{theorem}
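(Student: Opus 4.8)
The plan is to construct $f$ in three moves: (1) choose a suitable left-invariant holomorphic $1$-form $\eta\in\Gamma$ with integral real part; (2) show that the integration map $\phi_\eta\colon X\to\R/\Lambda$ of \eqref{ph} satisfies every clause of \cref{defcircle} except possibly the distinctness of critical values, with \emph{all} stratified Morse indices equal to $n$; (3) invoke \cref{lemperturb} to perturb $\phi_\eta$ into a genuine circle-valued Morse function, which changes neither the critical points, nor their indices, nor the extendability to a regular neighborhood, nor --- up to homotopy --- the induced map on $H_1$. The case $n=0$ is trivial: then $X$ is a single point, hence a $0$-dimensional stratum, so any map $X\to S^1$ is a circle-valued Morse function with its unique critical point of stratified Morse index $0=n$, it extends constantly to $G$, and surjectivity of $f_*$ is not required. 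So assume $n>0$; translating by an element of $G$ we may also assume $e\in X$.

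\textbf{Choice of $\eta$.} The assignment $\eta\mapsto\Re(\eta)$ is an $\R$-linear isomorphism $\Gamma\xrightarrow{\ \sim\ }H^1(G;\R)$: since $G$ is a complex torus, $H^1(G;\C)=H^{1,0}\oplus H^{0,1}=\Gamma\oplus\overline{\Gamma}$ and every real class is uniquely the real part of a holomorphic form. Hence the set $L\subset\Gamma$ of forms with integral real part is the preimage of the full-rank lattice $H^1(G;\Z)\subset H^1(G;\R)$; a full-rank lattice in $\Gamma\cong\C^N$ is Zariski dense (a nonzero complex polynomial vanishing on it would vanish on all of $\R^{2N}$), which in particular proves the density assertion made just before \eqref{ph}. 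Let $W:=U\cap U'\cap U''$, the nonempty Zariski open subset of $\Gamma$ obtained by intersecting the open sets of \cref{lemzariski1,lemzariski2,lemzariski3}. Since $n>0$, the restriction $i^*\colon H^1(G;\R)\to H^1(X;\R)$ is nonzero --- e.g.\ restrict further to (the normalization of) a curve $C\subseteq X$, which has positive genus as an abelian variety contains no rational curves, and use Abel--Jacobi. Thus $L_0:=\{\eta\in L:\ i^*\Re(\eta)=0\}$ is a proper sublattice of $L$, and $L\smallsetminus L_0$ is still Zariski dense: if $v_1\in L\smallsetminus L_0$ and a polynomial $p$ vanished on $L\smallsetminus L_0$, then $p(x)\,p(x+v_1)$ would vanish on all of $L$, forcing $p\equiv 0$. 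Fix $\eta\in(L\smallsetminus L_0)\cap W$, which is nonempty because $L\smallsetminus L_0$ is Zariski dense and $W$ is a nonempty open subset of the irreducible variety $\Gamma$.

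\textbf{From $\phi_\eta$ to $f$.} For this $\eta$ the subgroup $\Lambda=\Re(\eta)\big(i_*H_1(X;\Z)\big)\subset\R$ is nonzero (as $i^*\Re(\eta)\neq0$) and contained in $\Z$, so $\Lambda\cong\Z$ and $\R/\Lambda\cong S^1$. Let $T$ be an open regular neighborhood of $X$ in $G$; by property (i), $\phi_\eta$ extends to a smooth $\widetilde\phi_\eta\colon T\to\R/\Lambda\cong S^1$ with $(d\widetilde\phi_\eta)_x=\Re(\eta)_x$ for all $x\in X$. We now check \cref{defcircle} for $\phi_\eta$ with ambient manifold $M=T$: the critical points are precisely the points $x\in X$ where $\Re(\eta)|_{S_x}$ vanishes, i.e.\ the singularities of $\Re(\eta)$ in $X$, of which there are finitely many since $\eta\in U$ (\cref{lemzariski1}), and $\phi_\eta$ is proper because $X$ is compact; at such an $x$ a local primitive of $\Re(\eta)|_{S_x}$ is the real part of a holomorphic function, so by $\eta\in U'$ it has a nondegenerate critical point at $x$, of Morse index $\dim_\C S_x$ (holomorphic Morse lemma, exactly as in the proof of \cref{lemzariski2}); and $\eta\in U''$ gives $(d\widetilde\phi_\eta)_x=\Re(\eta)_x\notin D^*_{S_x}G=D^*_{S_x}T$ (\cref{lemzariski3}). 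Therefore $\phi_\eta$ obeys every clause of \cref{defcircle} except, possibly, distinctness of critical values, and its stratified Morse index at each critical point $x$ is $\codim_\C(S_x)+\dim_\C(S_x)=n$. Applying \cref{lemperturb} to $\widetilde\phi_\eta\colon T\to S^1$ produces $\widetilde h\colon T\to S^1$ with $h:=\widetilde h|_X$ a circle-valued Morse function having the same critical points and the same stratified Morse indices as $\phi_\eta$ (all equal to $n$) and extending by construction to $T$; by \cref{remhe}, $h\simeq\phi_\eta$, so $h_*=(\phi_\eta)_*\colon H_1(X;\Z)\to H_1(S^1;\Z)$ coincides with the surjection $\Re(\eta)\circ i_*\colon H_1(X;\Z)\twoheadrightarrow\Lambda=H_1(\R/\Lambda;\Z)$ by property (ii). Taking $f:=h$ completes the proof.

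\textbf{Main obstacle.} Essentially all of the analytic--geometric content is already packaged in \cref{lemzariski1,lemzariski2,lemzariski3}, which jointly arrange that $\Re(\eta)$ has only finitely many singularities on $X$, all regular, and none a degenerate conormal covector. The genuinely new point here is that these Zariski-open conditions are compatible with the \emph{integrality} of $\Re(\eta)$ \emph{and} with $\Lambda\neq0$ (the latter needed both for the target to be a circle and for the $H_1$-surjectivity), which is why the Zariski density of $L$ and of $L\smallsetminus L_0$ against a nonempty Zariski open set, together with the nonvanishing of $i^*$ for $n>0$, does the essential work. Everything else --- the index computation $s+\lambda=n$ and the promotion of $\phi_\eta$ to an honest Morse function --- is routine bookkeeping with the stratified Morse index formula and \cref{lemperturb}.
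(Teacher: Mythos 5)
Your proof is correct, and its skeleton is the same as the paper's: pick a left-invariant $\eta$ with integral real part lying in the generic set supplied by \cref{lemzariski1,lemzariski2,lemzariski3}, verify that $\phi_\eta$ satisfies every clause of \cref{defcircle} except distinctness of critical values, read off the stratified index $s+\lambda=\codim_\C S_x+\dim_\C S_x=n$ from the holomorphic Morse lemma, and finish with \cref{lemperturb} and \cref{remhe}. The one step where you genuinely diverge is circle-valuedness, i.e.\ $\Lambda\neq 0$. The paper takes an arbitrary $\eta$ in $R\cap W$ and excludes $\Lambda=0$ \emph{a posteriori}: if $\Lambda=0$ then $\phi_\eta\colon X\to\R$ is locally the real part of a holomorphic function whose differential $\re(\eta)$ has only finitely many singularities, hence $\phi_\eta$ is nonconstant on every irreducible germ, hence open by the open mapping theorem, contradicting compactness of $X$. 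You instead secure $\Lambda\neq 0$ \emph{a priori}: since $i^*\colon H^1(G;\R)\to H^1(X;\R)$ is nonzero when $n>0$, the integral forms with $i^*\re(\eta)=0$ form a proper sublattice $L_0$ of the full-rank lattice $L$, and $L\setminus L_0$ is still Zariski dense (your $p(x)\,p(x+v_1)$ trick is fine), so it meets $W$. Both arguments work; here is what each buys. Your route makes the surjectivity of $f_*$ visible already in the choice of $\eta$, and as a bonus gives a self-contained proof of the Zariski density of integral forms, which the paper outsources to \cite[Proposition 4.6]{LMWa}. The paper's route is purely local-analytic, so it transfers verbatim to the compact complex torus setting of \cref{ana}, whereas your justification that $i^*\neq 0$ (a compact curve $C\subseteq X$ obtained via projectivity, positive genus, Abel--Jacobi) leans on algebraicity and would need to be replaced there; also note that positive genus by itself is not quite the point---what you actually use is that the nonconstant map from the normalization $\widetilde C$ to $G$ pulls back some invariant holomorphic $1$-form nontrivially (equivalently, factors through $J(\widetilde C)$ by a nonconstant homomorphism), which is what Abel--Jacobi/Albanese functoriality provides and is worth a sentence if you write this up.
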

\begin{proof}
By translation in $G$, we ensure that $e \in X$.
The statement is trivial in dimension zero, so we assume $n > 0$.
Let $U'$ be the Zariski open set described in \cref{lemzariski2}, $U''$ be the Zariski open set from \cref{lemzariski3} and set $W=U'\cap U''$. Let
$$
R=\{\eta\in\Gamma\text{\ }\mid\text{\ }\re(\eta) \text{ is integral}\}.
$$
By using \cite[Proposition 4.6]{LMWa}, it can be seen that $R$ is Zariski dense in $\Gamma$. Therefore, $R\cap W\neq\emptyset$.
Pick an $\eta$ belonging to this set $R\cap W$.
Let $\phi_\eta$, $\Lambda$, and $T$ be as in (\ref{ph}).

We first show that $\phi_\eta$ is circle-valued. 
This is true only if $\Lambda \neq 0$.
Assume towards a contradiction that $\Lambda = 0$.
Then $\phi_\eta: X \rightarrow \R$ is locally the real part of a holomorphic function.
By property (i) of (\ref{ph}) the differential of $\phi_\eta$ equals $\re(\eta)$ along $X$.
Since $\re(\eta)$ has finitely many singular points on $X$, and $X$ has pure dimension $> 0$, it follows that $\phi_\eta$ is nonconstant on every irreducible germ of $X$.
By local considerations and \cite[Theorem II.5.7]{De} our map $\phi_\eta$ must be open. 
This is impossible, because $X$ is compact.  Therefore,  $\phi_\eta$ is circle-valued.

Since $X$ is compact, $\phi_\eta$ is proper.
By \cref{lemzariski2}, \cref{lemzariski3} and property (i) of (\ref{ph}), $\phi_\eta$ satisfies all of the conditions in \cref{defcircle} except for maybe having distinct critical values. 
By \cref{lemperturb}, we can perturb $\phi_\eta$ to get a circle-valued Morse function that we call $f$, extendible to $T$.
By \cref{remhe} and property (ii) of (\ref{ph}), $f_*$ induces a surjection on first homology.

Recall that in the proof of \cref{lemzariski2}, we saw (using the holomorphic Morse lemma) that the Morse index of $\left(\phi_\eta\right)_{|S}$ at a critical point $x\in S$ is $\dim_\C S$, where $S\in \s$. Hence, the stratified Morse index of $f$ at every critical point $x$ is $\dim_\C(S_x)+\codim_\C(S_x,X)=\dim_\C(X)=n$.
\end{proof}

\br\rm \label{ana}
We note here that all constructions in this section can also be performed (with only minor modifications of the arguments involved) in the analytic setting, and all results still hold true for connected pure-dimensional subvarieties of complex compact tori $\mathbb{C}^N/{\mathbb{Z}^{2N}}$. In order to see this, we consider the compactification $\overline{T^*G}:=\mathbb{P}(T^*G \oplus \mathbb{1}_G) \cong G \times \mathbb{C}\mathbb{P}^N$ of the cotangent bundle $T^*G=G \times \Gamma$, obtained by fiberwise projectivization (here $\mathbb{1}_G$ denotes the trivial complex line bundle on $G$), together with the analytic closures  $\overline{T^*_SG}$ (in $\overline{T^*G}$) of the conormal bundles $T^*_SG=Z_S$ of strata in $X$. The set $\overline{T^*_SG}|_S$ is a  $\mathbb{C}\mathbb{P}^{N-\dim(S)}$-bundle on $S$, and the projection $p_2:T^*_SG \to \Gamma\cong \mathbb{C}^N$ can be regarded as the restriction over an open set of a proper holomorphic (hence stratifiable) map  $\overline{p_2}:\overline{T^*_SG} \to \mathbb{C}\mathbb{P}^N$. Since strata in the target of $\overline{p_2}$ are subsets of a projective space, they are in fact algebraic (by Chow's Lemma). So all our arguments (e.g., Verdier's generic fibration theorem, Chevalley's theorem on constructible sets, generic smoothness) still apply in the context of the proper holomorphic map $\overline{p_2}$, with the generic open set $W$ of (the proof of) Theorem \ref{thmexistence} still a Zariski open in the algebraic sense.
\er

\section{Proof of the signed Euler characteristic property}\label{PR}
In this section we prove the signed Euler characteristic identities (\ref{e2}) and (\ref{e3}) from the Introduction in the case of closed subvarieties of abelian varieties.

We begin by recalling Thom's first isotopy lemma, e.g., see \cite[Chapter 1, 3.5]{dimca}.
\bt[Thom's first isotopy lemma]
Let $Z$ be a locally closed subset in the smooth manifold $M$, and let $\s$ be a Whitney stratification of $Z$. Let $f:M\longrightarrow N$ be a smooth map of manifolds such that $f_{|S}:S\longrightarrow N$ is a submersion and $f_{|\overline{S}}$ is proper for any stratum $S\in\s$. Then, $f$ fibers $Z$ over $N$ topologically locally trivially in the stratified sense, i.e., the trivialization homeomorphism $$\varphi:(f^{-1}(p)\cap Z)\times U\longrightarrow f^{-1}(U)
\cap Z$$
preserves the strata of the obvious stratifications induced by $\s$ on both the source and the target of $\varphi$.
\label{thmthom}
\et

The following lemma will be needed in the proof of our main result.
\bl\label{lemfiniteIH}
Let $f:X\longrightarrow S^1$ be a circle-valued Morse function as in \cref{thmexistence}. We identify $S^1$ with $\R/\Z$, and let $a,b\in \R$ with $a<b$ and $b-a<1$. Suppose there are no critical values of $f$ in $(a,b)$. Then the intersection homology groups
$$
IH_i\left(f^{-1}\big((a,b)\big)\right)
$$
are finitely generated abelian groups for all $i$.
\el
\begin{proof}
Since there are no critical values of $f$ in $(a,b)$, and $S^1$ is one-dimensional, we can apply Thom's first isotopy lemma to get that the restriction
$$
f:f^{-1}\big((a,b)\big)\longrightarrow (a,b)
$$
fibers $f^{-1}\big((a,b)\big)$ over $(a,b)$ topologically locally trivially in the stratified sense, so we have a trivialization homeomorphism
\be\label{prod}
\varphi:f^{-1}\left(c\right)\times (a,b)\longrightarrow f^{-1}\big((a,b)\big),
\ee
for $c \in (a,b)$.

The fiber $f^{-1}(c)$ is a compact real analytic subvariety and a pseudomanifold with only even codimension strata, so if we fix a given PL-structure of it, its corresponding intersection chain complex will consist of finitely generated abelian groups. In particular
$$
IH_i\left(f^{-1}(c)\right)
$$
are finitely generated abelian groups for all $i$. Since $\varphi$ is a homeomorphism, it follows from the topological invariance of intersection homology groups together with the K\"unneth formula that
$$
IH_i\left(f^{-1}\big((a,b)\right)\big)\cong IH_i\left(f^{-1}(c)\right)
$$
for all $i$, thus finishing our proof.
\end{proof}

We can now prove (\ref{e3}) in the abelian context.
\begin{theorem}\label{thmmain}
Let $X$ be a pure-dimensional subvariety of complex dimension $n$ of an abelian variety $G$. Then
$$
(-1)^n \cdot I\chi(X)\geq 0
$$
\end{theorem}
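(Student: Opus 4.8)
The plan is to apply \cref{corfg}: it suffices to produce an epimorphism $\vepsi:\pi_1(X)\twoheadrightarrow\Z$ whose associated infinite cyclic cover $X^{\vepsi}$ has finitely generated intersection homology in every degree $i\neq n$. Since the connected components of a pure $n$-dimensional closed subvariety of $G$ are again such subvarieties and $I\chi$ is additive over them, we may assume $X$ connected; the case $n=0$ is trivial, so assume $n>0$.

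First I would invoke \cref{thmexistence} to obtain a circle-valued Morse function $f:X\to S^1=\R/\Z$ that extends to a regular neighborhood $T$ of $X$ in $G$, induces a surjection $f_*:H_1(X;\Z)\twoheadrightarrow H_1(S^1;\Z)=\Z$, and has stratified Morse index $n$ at each critical point. As $\Z$ is abelian, $\vepsi:=f_*:\pi_1(X)\to\pi_1(S^1)=\Z$ is an epimorphism, and the cover $X^{\vepsi}\to X$ it defines is the pullback $X^f$ of $\R\to S^1$ along $f$. Lifting $f$ to $\tilde f:X^f\to\R$, one has that $\tilde f$ is proper (as $X$ is compact), that $X^f$ is analytically embedded in the complex manifold $T^f$ over which $\tilde f$ extends, and that — with the lifted Whitney stratification — $\tilde f$ is a stratified Morse function in the sense of \cref{deffunction}: its critical points are the lifts of those of $f$, all of stratified Morse index $n$, with pairwise distinct critical values in $\R$ (those of $f$ being distinct mod $\Z$).

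The core of the argument is to compute $IH_i(X^f)$ by exhausting $X^f$ with the compact slices $Z_k:=\tilde f^{-1}([c-k,\,c+k])$, where $c$ is a regular value of $\tilde f$ (so that all $c\pm k$ are regular and each $Z_k$ is a pseudomanifold with collared boundary, whence $IH_*(Z_k)\cong IH_*(\mathrm{int}\,Z_k)$ by \cref{relinthom}). Since any finite PL chain in $X^f$ has bounded support, $IC_*(X^f)=\varinjlim_k IC_*(\mathrm{int}\,Z_k)$, so $IH_i(X^f)=\varinjlim_k IH_i(Z_k)$ with maps induced by the inclusions $Z_k\subset Z_{k+1}$. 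Now $Z_{k+1}$ is obtained from $Z_k$ by first raising the ceiling from $c+k$ to $c+k+1$ and then lowering the floor from $c-k$ to $c-k-1$; each of these passes, one at a time, over finitely many critical values, the first using \cref{ta,tb,thmvariation} for $\tilde f$, the second using them for $-\tilde f$. The key observation is that $-\tilde f$ has the same critical points, still all of stratified Morse index $n$: if such a point lies on a stratum of complex codimension $s$, the classical Morse index of $f_{|S_x}$ there equals $n-s=\dim_\C S_x$, so that of $(-f)_{|S_x}$ is $\dim_\R S_x-(n-s)=n-s$, and again $s+(n-s)=n$. Thus each single critical-value crossing contributes a relative intersection homology group concentrated in degree $n$, so the inclusion $Z_k\hookrightarrow Z_{k+1}$ induces an isomorphism on $IH_i$ for $i\neq n-1,n$ and a surjection for $i=n-1$ (via the long exact sequences of the pairs). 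Since by \cref{lemfiniteIH} the slice $Z_0$ (taken as $\tilde f^{-1}([c-\e,c+\e])$ for $\e$ small) has finitely generated intersection homology in all degrees, the colimit $IH_i(X^f)$ is isomorphic to $IH_i(Z_0)$ for $i\neq n-1,n$ and is a quotient of $IH_{n-1}(Z_0)$ for $i=n-1$. In every case $IH_i(X^{\vepsi})$ is finitely generated for $i\neq n$, and \cref{corfg} yields $(-1)^n\cdot I\chi(X)\geq 0$.

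I expect the main obstacle to be the non-compact bookkeeping in the last step: justifying that intersection homology commutes with the exhaustion $\{Z_k\}$, matching the in-between slices $\mathrm{int}\,Z_k$ with genuine sublevel sets so that \cref{thmvariation} applies, and — most importantly — verifying that both ends of $\tilde f$ contribute only handles of stratified Morse index $n$, which is what makes the direct system stabilize in all degrees other than $n$.
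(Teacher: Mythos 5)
Your proposal is correct and follows essentially the same route as the paper: \cref{thmexistence} plus the lift to a proper real-valued stratified Morse function on the infinite cyclic cover, \cref{lemfiniteIH} for a slab around a regular value, \cref{thmvariation} (with \cref{relinthom}) to see that every critical-value crossing for $\pm\widetilde f$ is concentrated in degree $n$, and finally \cref{corfg}. The only difference is bookkeeping: you exhaust by compact two-sided slabs $Z_k$ expanded at both ends, while the paper first passes to a half-infinite sublevel set and runs the argument downward with $-f'$ and then upward with $f'$; your explicit check that $-\widetilde f$ also has stratified Morse index $n$ at every critical point is exactly the point the paper uses implicitly.
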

\begin{proof} Since $I\chi$ is additive under disjoint union and connected components of projective varieties are again projective, we may assume without any loss of generality that $X$ is connected.
We also assume $n > 0$, because the zero-dimensional case is trivial.

Let $f:X\longrightarrow S^1$ be a circle valued Morse function as in \cref{thmexistence}, and $T$ an associated open regular neighborhood of $X$ in $G$ for which $f$ extends to a map $\widetilde{f}: T \rightarrow S^1$.
Recall that $X \hookrightarrow T$ is a homotopy equivalence, and $f$ induces a surjection on first homology, hence also on fundamental groups.

Let $\widetilde{X}$ and $\widetilde{T}$ be the respective infinite cyclic covers of $X$ and $T$ associated to $\ker(f_*)$ and $\ker({\widetilde f}_*)$. We can naturally regard $\widetilde X$ as a Whitney stratified complex analytic subvariety of the smooth complex analytic manifold $\widetilde T$. Let $\R\longrightarrow S^1$ be the universal cover. We have an induced map $f':\widetilde X \longrightarrow \R$ that factors through $\widetilde T$ by a smooth map $\widetilde f':\widetilde T\longrightarrow \R$. While $\widetilde X$ is not compact, the map $f'$ is a proper map. Moreover, $f':\widetilde X \longrightarrow \R$ is a Morse function such that the stratified Morse index of any of its critical points is $n$.
By \cref{corfg}, in order to prove the theorem in this case, it suffices to show that $IH_i(\widetilde X)$ is a finitely generated abelian group for all $i\neq n$.

Assume first that $f$ has at least one critical point. Let $\{c_k\}\subset\R$ be the critical values of $f'$, which are the lifts of the critical values of $f$ to the universal cover $\R$, ordered in an increasing order, for $k\in\Z$. Let $\delta>0$ be small enough so that $c_{k+1}-c_k>\delta$ for all $k$. Let $b_k=c_k-\delta$, for all $k$.

We let $\widetilde X_{< a}:=(f')^{-1}((-\infty,a))$, and  $\widetilde X_{(a,b)}:=(f')^{-1}((a,b))$ for all $a<b$. Since $b_0$ is not a critical value of $f'$, there exists $\frac{1}{2}>\vepsi>0$ small enough such that there are no critical values of $f'$ in $[b_0-\vepsi,b_0+\vepsi]$ and
$$\widetilde X_{(b_0-\vepsi,b_0+\vepsi)}\cong f^{-1}\left((b_0-\vepsi,b_0+\vepsi)\right),$$
where $(b_0-\vepsi,b_0+\vepsi)$ is naturally seen here inside of $S^1$. It then follows by  \cref{lemfiniteIH} that
$$
IH_i(\widetilde X_{(b_0-\vepsi,b_0+\vepsi)})
$$
are finitely generated abelian groups for all $i$.

Let $Z:=(f')^{-1}\left((-\infty,b_0+\vepsi)\right)$, with its induced Whitney stratification inside  the manifold $M:=(\widetilde f')^{-1}\left((-\infty,b_0+\vepsi)\right)$. Consider the function
$$
\widetilde g:M\longrightarrow \R
$$
given by $\widetilde g=-\widetilde f'$, and let $g:Z\longrightarrow \R$ be the restriction of $\widetilde g$ to $Z$.

Let $a_0:=-b_0+\vepsi$ and $a_k=-b_{-k}$ for all $k>0$. We have the following long exact sequence of a pair in intersection homology, for all $k\geq 0$:
$$
\begin{array}{c}
\ldots\lra IH_{i+1}(g^{-1}\left((-\infty,a_{k+1})\right),g^{-1}\left((-\infty,a_{k})\right))\lra IH_i(g^{-1}\left((-\infty,a_{k})\right))\lra\\ \lra IH_i(g^{-1}\left((-\infty,a_{k+1})\right))
\lra IH_i(g^{-1}\left((-\infty,a_{k+1})\right),g^{-1}\left((-\infty,a_{k})\right))\lra\ldots
\end{array}
$$
Using \cref{thmvariation}, \cref{relinthom}, and the fact that the stratified Morse index of every critical point of $f'$ is $n$, we get that
$$
\left\{\begin{array}{lr}
IH_i(g^{-1}\left((-\infty,a_{k})\right))\cong IH_i(g^{-1}\left((-\infty,a_{k+1})\right)) &\text{ for }i\neq n-1, n\\
IH_i(g^{-1}\left((-\infty,a_{k})\right))\twoheadrightarrow IH_i(g^{-1}\left((-\infty,a_{k+1})\right)) &\text{ for }i=n-1\\
IH_i(g^{-1}\left((-\infty,a_{k})\right))\hookrightarrow IH_i(g^{-1}\left((-\infty,a_{k+1})\right)) &\text{ for }i=n\\
\end{array}\right.
$$
for every $k\geq 0$, or equivalently
\begin{equation}
\left\{\begin{array}{lr}
IH_i(\widetilde{X}_{(-a_k,b_0+\vepsi)})\cong IH_i(\widetilde{X}_{(-a_{k+1},b_0+\vepsi)}) &\text{ for }i\neq n-1, n\\
IH_i(\widetilde{X}_{(-a_k,b_0+\vepsi)})\twoheadrightarrow IH_i(\widetilde{X}_{(-a_{k+1},b_0+\vepsi)}) &\text{ for }i=n-1\\
IH_i(\widetilde{X}_{(-a_k,b_0+\vepsi)})\hookrightarrow IH_i(\widetilde{X}_{(-a_{k+1},b_0+\vepsi)}) &\text{ for }i=n.\\
\end{array}\right.
\label{eqnarrows}
\end{equation}
Since $\bigcup_{k=0}^\infty \widetilde{X}_{(-a_k,b_0+\vepsi)}=\widetilde{X}_{<b_0+\vepsi}$, and every compact set of $\widetilde{X}_{<b_0+\vepsi}$ is contained in $\widetilde{X}_{(-a_k,b_0+\vepsi)}$ for some $k$, we have that
$$
\varinjlim_k IH_i(\widetilde{X}_{(-a_k,b_0+\vepsi)})=IH_i(\widetilde{X}_{<b_0+\vepsi}).
$$
Recall now that $IH_i(\widetilde X_{(b_0-\vepsi,b_0+\vepsi)})$ is a finitely generated abelian group for all $i$. On the other hand, the first two lines of (\ref{eqnarrows}) yield that
$$
\left\{\begin{array}{lr}
IH_i(\widetilde{X}_{<b_0+\vepsi})\cong IH_i(\widetilde X_{(-a_{0},b_0+\vepsi)})=IH_i(\widetilde X_{(b_0-\vepsi,b_0+\vepsi)}) &\text{ for }i\neq n-1, n\\
\dim_\C IH_i(\widetilde{X}_{<b_0+\vepsi})\leq \dim_\C IH_i(\widetilde X_{(b_0-\vepsi,b_0+\vepsi)}) &\text{ for }i=n-1.\\
\end{array}\right.
$$
Therefore, 
$$
IH_i(\widetilde{X}_{<b_0+\vepsi})
$$
is a finitely generated abelian group for all $i\neq n$.

Now, let $d_0:= b_0+\vepsi$ and $d_k=b_k$ for all $k>0$. We can induct on $k\geq 0$ as before, by using the following long exact sequence of pairs
$$
\ldots\rightarrow IH_{i+1}(\widetilde{X}_{<d_{k+1}},\widetilde{X}_{<d_{k}})\lra IH_i(\widetilde{X}_{<d_{k}})\lra IH_i(\widetilde{X}_{<d_{k+1}})\lra IH_i(\widetilde{X}_{<d_{k+1}},\widetilde{X}_{<d_{k}})\rightarrow\ldots ,
$$
and apply \cref{thmvariation} (for $f'$), \cref{relinthom}, and the fact that the stratified Morse index of every critical point of $f'$ is $n$ as before, to get that
$$
IH_i(\widetilde{X})\text{ is a finitely generated abelian group for all }i\neq n\text{.} 
$$
The result now follows from \cref{corfg}.

If $f$ does not have any critical points, we argue as follows. Let $a\in\R$, and let $\frac{1}{2}>\vepsi>0$. Let $Z=(f')^{-1}\big((-\infty,a+\vepsi)\big)$, with its induced Whitney stratification. We first apply Theorem \ref{ta} to the function $g=-f'_{|Z}$, and use the topological invariance of intersection homology to get that
$$
IH_i(\widetilde{X}_{<a+\vepsi})\cong IH_i(\widetilde X_{(a-\vepsi,a+\vepsi)})
$$
for all $i$.  We the apply Theorem \ref{ta} to the function $f'$ to conclude (by using again  the topological invariance of intersection homology) that 
$$
IH_i(\widetilde{X})\cong IH_i(\widetilde{X}_{<a+\vepsi})
$$
for all $i$. The result follows now, as before, from \cref{lemfiniteIH} and \cref{corfg}.

\end{proof}

The identity (\ref{e2}) for the Euler-Poincar\'e characteristic of local complete intersections is proved in the following result:

\begin{theorem}
\label{thmlci}
Let $X$ be a pure dimensional subvariety of complex dimension $n$ of an abelian variety $G$. If $X$ is a local complete intersection, then
$$
(-1)^n \cdot \chi(X)\geq 0.
$$
\end{theorem}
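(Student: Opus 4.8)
The plan is to prove \cref{thmlci} by following the proof of \cref{thmmain} essentially line by line, replacing intersection homology $IH_*$ by ordinary homology $H_*$ and $I\chi$ by $\chi$ throughout. As in \cref{thmmain}, since $\chi$ is additive under disjoint union we may assume $X$ is connected, and since the zero-dimensional case is trivial we assume $n>0$. Fix a circle-valued Morse function $f\colon X\to S^1$ as in \cref{thmexistence}, extending to $\widetilde f\colon T\to S^1$ on an open regular neighborhood $T$ of $X$ in $G$, with $f_*$ surjective on first homology and with stratified Morse index $n$ at every critical point. Passing to the infinite cyclic covers $\widetilde X\subset\widetilde T$ associated to $\ker(f_*)$ and $\ker(\widetilde f_*)$ and lifting along $\R\to S^1$, we obtain a proper Morse function $f'\colon\widetilde X\to\R$ (the restriction of a smooth $\widetilde f'\colon\widetilde T\to\R$), all of whose critical points still have stratified Morse index $n$. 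By the ordinary-homology version of \cref{corfg} recorded in \cref{remusual}, it then suffices to show that $H_i(\widetilde X)$ is a finitely generated abelian group for every $i\neq n$.

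Three inputs of the proof of \cref{thmmain} must be replaced. First, \cref{lemfiniteIH} is replaced by its ordinary-homology analogue: if $(a,b)$ contains no critical value of $f$ and $b-a<1$, then $H_i\bigl(f^{-1}((a,b))\bigr)$ is finitely generated for all $i$; this follows in exactly the same way, via \cref{thmthom} and the fact that the fiber $f^{-1}(c)$ is a compact (hence triangulable) space with finitely generated homology, together with the K\"unneth formula for $f^{-1}(c)\times(a,b)$. Second, \cref{thmvariation} is replaced by \eqref{lci} of \cref{clv}: the complex link $\cL_x$ of a stratum of $\widetilde X$ coincides with that of the corresponding stratum of $X$, and since $X$ is a local complete intersection, $\cL_x$ has the homotopy type of a bouquet of $(s-1)$-spheres, so when $[a,b]$ isolates a single critical value of $f'$ the relative groups $H_i(\widetilde X_{\leq b},\widetilde X_{\leq a})$ vanish for every $i\neq i_x=n$. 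Third, \cref{corfg} is replaced by \cref{remusual}. One must also verify that the auxiliary function $g=-f'$ used in the exhaustion argument has stratified Morse index $n$ at each of its critical points: locally $f'=\re(\Phi)$ for a holomorphic $\Phi$ with nondegenerate Hessian, so $g=\re(-\Phi)$ with $-\Phi$ again holomorphic and nondegenerate, whence by the holomorphic Morse lemma the classical Morse index of $g|_{S_x}$ is again $\dim_\C S_x$, and the stratified Morse index of $g$ at $x$ equals $\codim_\C(S_x,X)+\dim_\C S_x=n$.

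With these substitutions, the exhaustion argument of \cref{thmmain} goes through unchanged. If $f$ has a critical point, one orders the critical values $\{c_k\}$ of $f'$, sets $b_k=c_k-\delta$ for small $\delta>0$, splits $\widetilde X$ at a regular level $b_0+\varepsilon$, and uses the long exact homology sequences of the pairs $\bigl(g^{-1}((-\infty,a_{k+1})),g^{-1}((-\infty,a_k))\bigr)$ and then $\bigl(\widetilde X_{<d_{k+1}},\widetilde X_{<d_k}\bigr)$, together with the vanishing from \eqref{lci}, to propagate finite generation from the slab $\widetilde X_{(b_0-\varepsilon,b_0+\varepsilon)}$ --- finitely generated in every degree by the first input --- outward to all of $\widetilde X$ in every degree $i\neq n$; here one uses that homology commutes with the relevant direct limits and that the connecting maps are isomorphisms for $i\neq n-1,n$ and surjective (resp.\ injective) for $i=n-1$ (resp.\ $i=n$). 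If $f$ has no critical point, \cref{ta} applied to $g=-f'$ and then to $f'$, combined with the topological invariance of homology and the first input, gives directly $H_i(\widetilde X)\cong H_i(\widetilde X_{(a-\varepsilon,a+\varepsilon)})$. In either case, \cref{remusual} then yields $(-1)^n\cdot\chi(X)\geq 0$.

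I do not anticipate a serious obstacle: all of the substantive work has already been done, on one hand in \cref{thmexistence} (producing a circle-valued Morse function with all stratified Morse indices equal to $n$), and on the other hand in \eqref{lci}, which packages the local-complete-intersection hypothesis into the statement that complex links of strata are bouquets of spheres. The only points requiring genuine care are the bookkeeping around the auxiliary function $g=-f'$ and its index, and the well-definedness of the relative homology groups in the long exact sequences, the latter handled exactly as in \cref{relinthom} by the fact that regular sublevel sets of $f'$ are smoothly enclosed in $\widetilde X$.
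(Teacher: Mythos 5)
Your proposal is correct and follows essentially the same route as the paper: the paper likewise runs the proof of \cref{thmmain} verbatim with $H_*$ in place of $IH_*$, invoking the homeomorphism of complex links under the covering $\widetilde T\to T$ and the local complete intersection hypothesis via \eqref{lci} of \cref{clv} to get vanishing of the relative groups outside degree $n$, and concluding with \cref{remusual}. Your extra checks (the ordinary-homology version of \cref{lemfiniteIH}, and the index of $g=-f'$) are exactly the routine verifications the paper leaves implicit.
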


\begin{proof}
Again we may assume connectivity and nonzero dimension of $X$.
Let $f:X\longrightarrow S^1$ be a circle-valued Morse function as in \cref{thmexistence}. The proof follows the steps of the proof of \cref{thmmain}. 
We just need to observe that the complex links around critical points of the real-valued Morse function induced on the infinite cyclic cover $\widetilde X$ are homotopy equivalent to the corresponding complex links of strata in $X$ via the local homeomorphism given by the covering $\widetilde T\longrightarrow T$. Hence, since $X$ is a local complete intersection, by (\ref{lci}) in Remark \ref{clv}, we get that
$$
H_i(\widetilde X_{\leq b}, \widetilde X_{\leq a})\cong 
\left\{\begin{array}{cc}
\Z^{\oplus r} & \text{if }i=n\\
0 & \text{if }i\neq n
\end{array}\right.
$$
for some integer $r$, 
if $a<b$ are such that there is only one critical point $x$ of $f'$ in $(f')^{-1}([a,b])$, and $f'(x)\in(a,b)$. The rest of the proof follows as in the proof of \cref{thmmain}, using also \cref{remusual}.
\end{proof}


\providecommand{\bysame}{\leavevmode\hbox
to3em{\hrulefill}\thinspace}


\begin{thebibliography}{10}

\bibitem{BSS} Bhatt, B., Schnell, C., Scholze, P., {\it Vanishing theorems for perverse sheaves on abelian varieties, revisited}, arXiv:1702.06395.

\bibitem{Bo} Borel, A. et. al., {\it Intersection cohomology}, Progress in Mathematics, vol. 50, Birkh\"auser, Boston, 1984.

\bibitem{BWb} Budur, N., Wang, B., {\it The signed Euler characteristic of very affine varieties},  Int. Math. Res. Not. IMRN 2015, no. 14, 5710--5714.

\bibitem{De}
Demailly, J., {\it Complex Analytic and Differential Geometry}, https://www-fourier.ujf-grenoble.fr/$\sim$demailly/manuscripts/agbook.pdf (2012).

\bibitem{dimca}
Dimca, A., 
\textit{Singularities and Topology of Hypersurfaces},
Springer-Verlag, New York (1992)

\bibitem{Di} Dimca, A., {\it Sheaves in topology},
 Springer-Verlag, Berlin (2004).
 
 \bibitem{Du} Durfee, A., {\it Neighborhoods of algebraic sets}, Trans. Amer. Math. Soc. 276 (1983), 517--530.


\bibitem{FK} Franecki, J., Kapranov, M., {\it The Gauss map and a noncompact Riemann-Roch formula for constructible sheaves on semiabelian varieties}, Duke Math. J. 104 (2000), no. 1, 171--180.

\bibitem{Fr} Friedman, G., {\it Singular Intersection Homology}, http://faculty.tcu.edu/gfriedman/IHbook.pdf (2017).

\bibitem{GL} Gabber, O., Loeser, F., {\it Faisceaux pervers l-adiques sur un tore}, Duke Math. J. 83 (1996), no. 3, pp. 501--606.

\bibitem{G}
Goresky, M., 
{\it Triangulation of stratified objects},
Proc. Amer. Math. Soc. 72 (1978), 193--200.

\bibitem{GM1} Goresky, M., MacPherson, R., {\it Intersection homology theory},  Topology  {19}  (1980), no. 2, 135--162.

\bibitem{GM2} Goresky, M., MacPherson, R., {\it Intersection homology. II.}, Invent. Math. {72} (1983), no. 1, 77--129.

\bibitem{GM} 
Goresky, M., MacPherson, R., \textit{Morse theory and intersection homology theory}.
Analysis and topology on singular spaces, II, III (Luminy, 1981), 135--192, Ast\'erisque, 101-102, Soc. Math. France, Paris, 1983.

\bibitem{GMP} Goresky, M., MacPherson, R., {\it Stratified Morse theory}, Ergebnisse der Mathematik und ihrer Grenzgebiete (3), {14}. Springer-Verlag, Berlin, 1988

\bibitem{Ha} Hamm, H., {\it On stratified Morse theory: from topology to constructible sheaves}, 
J. Singul. 13 (2015), 141--158.

\bibitem{Iit} Iitaka, S., {\it  Logarithmic forms of algebraic varieties}. J. Fac. Sci. Univ. Tokyo Sect. IA Math. {23} (1976), no. 3, 525--544. 

\bibitem{KL} Kirk, P., Livingston,  C., 
{\it Twisted Alexander invariants, Reidemeister torsion, and Casson-Gordon invariants}, 
Topology 38 (1999), no. 3, 635--661.

 \bibitem{Kra} Kr\"{a}mer, T.,  {\it Perverse sheaves on semiabelian varieties}, Rend. Semin. Mat. Univ. Padova {132} (2014), 83--102.
 
\bibitem{Le} L\^e,  D. T.,   {\it Sur  les  cycles  \'evanouissants  des  espaces  analytiques}, C.  R.  Acad.  Sci.  Paris,  S\'er.  A-B 288 (1979), no. 4, A283--A285.

\bibitem{LMWa} Liu, Y., Maxim, L., Wang, B., {\it Topology of subvarieties of complex semi-abelian varieties},
arXiv:1706.07491.

\bibitem{LMWb} Liu, Y., Maxim, L., Wang, B., {\it  Generic vanishing for semi-abelian varieties and integral Alexander modules}, 
arXiv:1707.09806.

\bibitem{LMWc} Liu, Y., Maxim, L., Wang, B., {\it  Mellin transformation, propagation, and abelian duality spaces}, arXiv:1709.02870.


\bibitem{MP}  MacPherson, R., \textit{Intersection homology and perverse sheaves}, Colloquium Lecture notes distributed by the AMS
(1991).

\bibitem{massey} Massey, D. B., 
{\it Stratified  Morse theory:  Past and Present}, 
Pure. Applied. Math. Quart. 2 (2006), no.4, 1053--1084.

\bibitem{MT} McTague, C., {\it Stratified Morse theory}, unpublished expository essay.

\bibitem{Mi2} Milnor, J., {\it Singular points of complex hypersurfaces}, Annals of Mathematics Studies, No. {61}, Princeton University Press, Princeton, N.J.; University of Tokyo Press, Tokyo 1968.

\bibitem{cobordism}
Milnor, J., 
\textit{Lectures on the $h$-cobordism theorem},
Princeton University Press (1965).

\bibitem{Sc15} Schnell, C., {\it Holonomic D-modules on abelian varieties}, Publ. Math. Inst. Hautes \'Etudes Sci. {121} (2015), 1--55. 

\bibitem{ravi}
Vakil, R., 
{\it Foundations of Algebraic Geometry}.
Lecture notes, available at \url{http://math.stanford.edu/~vakil/216blog/FOAGjun1113public.pdf}

\bibitem{Ve} Verdier, J.-L., {\it Stratifications de Whitney et th\'eor\`eme de Bertini-Sard}, Invent. Math. 36 (1976), 295--312.

\bibitem{We}
Weissauer, R., {\it Vanishing theorems for constructible sheaves on abelian varieties over finite fields}, Math. Ann. {365} (2016), no. 1-2, 559--578.

\end{thebibliography}
\end{document}